\newtheorem{thm}{Theorem}
\newtheorem{prop}{Proposition}
\newtheorem{lem}{Lemma}
\newtheorem{conjecture}{Conjecture}
\numberwithin{equation}{section}
\numberwithin{prop}{section}
\numberwithin{lem}{section}
\numberwithin{thm}{section}
\newtheorem{cor}{Corollary}
\numberwithin{cor}{section}
\theoremstyle{definition}
\newtheorem{defn}{Definition}
\numberwithin{defn}{section}
\newtheorem{rem}{Remark}
\numberwithin{rem}{section}
\newcommand{\ZZ}{\mathbb{Z}}
\newcommand{\CC}{\mathbb{C}}
\newcommand{\be}{\begin {equation}}
\newcommand{\ee}{\end{equation}}
\newcommand{\bee}{\begin {equation*}}
\newcommand{\eee}{\end{equation*}}
\newcommand{\nop}[1]{{}^{\scriptscriptstyle{\circ}}_{\scriptscriptstyle{\circ}}{#1}{}^{\scriptscriptstyle{\circ}}_{\scriptscriptstyle{\circ}}}
\def \F{\mathcal{F}}
\def \L{\Lambda}
\def \1{\mathbb{1}}
\def \V{\mathcal{V}}
\def \L{\mathcal{L}}
\begin{document}
\title{$S_3$-permutation orbifolds of Virasoro vertex algebras}
\author{Antun Milas, Michael Penn, Christopher Sadowski}
\address{Department of Mathematics and Statistics, SUNY-Albany}
\email{amilas@albany.edu}
\address{Mathematics Department, Randolph College}
\email{mpenn@randolphcollege.edu}
\address{Department of Mathematics and Computer Science, Ursinus College}
\email{csadowski@ursinus.edu}

\maketitle

\begin{abstract} 
In this paper, a continuation of \cite{MPS}, we investigate the $S_3$-orbifold subalgebra of $(\V_c)^{\otimes 3}$, that is, we consider the $S_3$-fixed point vertex subalgebra of the tensor product of three copies of the universal Virasoro vertex operator algebras $\V_c$. 
Our main result is construction of a minimal, strong set of generators of this subalgebra for any generic values of $c$. More precisely, we show that  
this vertex algebra is of type $(2,4,6^2,8^2,9,10^2,11,12^3)$.

We also investigate two prominent examples of simple $S_3$-orbifold algebras corresponding to central charges $c=\frac12$ (Ising model) and $c=-\frac{22}{5}$ (i.e. $(2,5)$-minimal model). 
We prove that the former is a new unitary $W$-algebra of type $(2,4,6,8)$ and the latter is isomorphic to the affine simple $W$-algebra of type $\frak{g}_2$ at non-admissible level $-\frac{19}{6}$. We also provide another version of this isomorphism using the affine $W$-algebra of type $\frak{g}_2$ coming from a subregular nilpotent element.

\end{abstract}

\section{Introduction}




In recent years there has been considerable attention given to the study of various aspects of permutation orbifold algebras of vertex algebras (see \cite{A, BDM,BHL,DLXY, DRX1,DRX2,DRX3, DRX4} and references therein, where the term ``orbifold" is used instead of ``orbifold (sub)algebra"). Although a lot is known about their representation theory, we know very little about the structure (e.g. their type) of permutation orbifold algebras even for familiar examples of vertex algebra such as Heisenberg, Virasoro, and affine vertex algebras. Apart from a few general results on orbifold algebras being finitely generated with respect to a reductive group \cite{L2,L3}, there is no known method based on invariant theory for finding a minimal generating set for an orbifold algebra. This is mainly due to non-trivial quantum corrections that depend on the vertex algebra being studied.
In our previous works, some with coauthors, \cite{MPS,MPSh,MP2,LMW}, we were able to construct minimal generating sets for low rank permutation orbifold algebras arising from 
the Heisenberg, free fermion, Virasoro, $\frak{sl}_2$ and $N=1,2$ superconformal algebras.
In particular, in \cite{MPS} we found a minimal generating set for the $3$-cycle permutation orbifold algebra associated to the Virasoro vertex algebra for every value of the central charge. 

To introduce the problem, we being by recalling our notation from \cite{MPS}. For a vertex algebra $V$, the $n$-fold tensor product will be denoted by $V^{\otimes n}:=V \otimes \cdots \otimes V$. The vector space  $V^{\otimes n}$ has a natural vertex operator algebra structure on which the symmetric group $S_n$ acts on $V^{\otimes n}$ by permuting tensor factors and thus $S_n \subset {\rm Aut}(V^{\otimes n})$. The ${S_n}$-invariant subalgebra of $V^{\otimes n}$, denoted by $(V^{\otimes n})^{S_n}$ we call the 
{\em $S_n$-{orbifold algebra}} of $V$. 
Throughout this work we only consider the Virasoro vertex algebra. We denote by $\V_c:=V_{Vir}(c,0)$ the universal Virasoro vertex operator algebra of central charge $c$.
It is (freely) generated by the weight $2$ conformal vector $\omega$ with vertex operator $Y(\omega,z)=L(z)=\sum_{n\in\ZZ}L(n)z^{-n-2}$ and the operator product expansion (OPE) 
\be
L(z)L(w) \sim \frac{\partial_w L(w)}{(z-w)}+\frac{2 L(w)}{(z-w)^2}+\frac{c/2}{(z-w)^4}.
\ee
 We denote by $\L_c:=L_{Vir}(c,0)$ its unique simple quotient. 
For $c=c_{p,q}:=1-\frac{6(p-q)^2}{pq}$, where where $p,q \geq 2$ are coprime integers (i.e. minimal models), we have $\V_c \neq \L_c$ and moreover $\L_c$ is a regular vertex algebra.
We already know that $Aut (\V_c^{\otimes n})=S_n$ (see for example \cite{MPS}) and therefore for any subgroup of $G \subset S_n$ we have a fixed point subalgebra $(\V_c^{\otimes n})^G$.  
The key question in this line of research is to describe the structure of $(\L_c^{\otimes n})^G$ using a minimal finite set of generators and to find possible isomorphisms with more familiar $W$-algebras. Some results in this direction were already given in \cite{MPS}.

In this paper we consider $S_3$-orbifold subalgebra of $(\V_c)^{\otimes 3}$ for a generic central charge $c$, and two distinguished simple orbifold algebras of central charge $c=\frac12$ and $c=-\frac{22}{5}$

This paper is organized as follows:
In Section 2 we setup the necessary notation and recall some basic facts about OPEs of fields of vertex algebras. We begin Section 3 by defining
a suitable large $c$ limit of the Virasoro algebra that we denote by $\V_\infty$. This vertex algebra and associated orbifold algebras can be used to obtain information about strong generators 
of orbifold algebras of $\V_c$ at least if $c$ is generic (see \cite{L3}). 
We first reduce generators to an infinite set of quadratic and cubic generators using standard methods of invariant theory. Then, in the most difficult part of the paper, we remove all 
but finitely many quadratic and cubic generators. Further reduction removes a few additional generators. The resulting generating set turns out to be minimal.
Our main result in this section can be summarized as:
\begin{thm} \label{main} For any generic $c$, including a suitably defined $c \to \infty$ limit, the $S_3$-orbifold subalgebra  $(\V_c^{\otimes n})^{S_3}$ is strongly generated by vectors of 
of weight $2,4,6,6,8,8,9,10,10,11,12,12,12$. Moreover, this is also a minimal generating set. 
\end{thm}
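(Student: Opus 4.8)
The plan is to work first in the large central charge limit $\V_\infty$ and then transfer the result to generic $c$. By the general principle of Linshaw (\cite{L3}), a strong generating set for the $S_3$-orbifold of $\V_\infty$ lifts to a strong generating set for $(\V_c^{\otimes 3})^{S_3}$ for all but finitely many (hence generic) values of $c$, and the decoupling relations established in the limit persist with $c$-dependent coefficients away from their finitely many poles and zeros. Since $\V_\infty^{\otimes 3}$ is a commutative differential algebra freely generated by the three weight-$2$ fields $\omega^1,\omega^2,\omega^3$ together with all their $\partial$-derivatives, its $S_3$-orbifold is exactly the algebra of multisymmetric functions in these differential variables. By the first fundamental theorem of invariant theory for $S_3$ (Weyl's polarization theorem), this algebra is generated, as a differential algebra, by the polarized power sums of degree at most $3$: the linear invariants $\sum_{i=1}^3 \partial^a\omega^i$, the quadratic invariants $\sum_{i=1}^3 \partial^a\omega^i\,\partial^b\omega^i$, and the cubic invariants $\sum_{i=1}^3 \partial^a\omega^i\,\partial^b\omega^i\,\partial^c\omega^i$.

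Next I would pass from this differential generating set to a set of strong generators by discarding everything that is a total derivative of a lower element. For the linear invariants this leaves only $\omega^1+\omega^2+\omega^3$, which becomes a multiple of the conformal vector of the orbifold. For the quadratic invariants, integration by parts reduces the two derivative indices and leaves exactly one new quasiprimary generator at each even weight $4,6,8,10,\dots$, the odd-weight bilinears being total derivatives. For the cubic invariants a similar but more delicate count produces new quasiprimary generators beginning at weight $6$ (the field $\sum_i :\omega^i\omega^i\omega^i:$) and continuing at weights $8,9,10,11,12,\dots$ with the appropriate multiplicities. At this stage the orbifold is strongly generated by the weight-$2$ field, an infinite family of quadratic generators, and an infinite family of cubic generators, matching the reduction announced in the introduction.

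The heart of the argument, and the step I expect to be the main obstacle, is to show that all but finitely many of these quadratic and cubic generators decouple, i.e. can be rewritten as normally ordered polynomials in the surviving generators and their derivatives. Concretely, I would establish by explicit computation of normally ordered products and their $\partial$-derivatives that every quadratic generator of weight $\ge 14$ and every cubic generator of weight $\ge 13$ (together with one of the weight-$12$ cubics if necessary) lies in the span of normally ordered products of lower-weight generators; these relations hold identically in $\V_\infty$ and extend to generic $c$ once the quantum corrections are computed, after which a final round of relations trims a few additional generators near this boundary. Retaining the quadratic generators at weights $4,6,8,10,12$, the cubic generators at weights $6,8,9,10,11,12,12$, and the linear generator at weight $2$ then yields exactly the weights $2,4,6,6,8,8,9,10,10,11,12,12,12$, i.e. type $(2,4,6^2,8^2,9,10^2,11,12^3)$. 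The difficulty is twofold: the decoupling relations require careful bookkeeping of the interaction between the quadratic and cubic families (a high cubic may decouple only with the help of quadratic generators, and conversely), and one must check that the coefficients appearing are nonzero for generic $c$ so that the cutoff is exactly as stated.

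Finally, to prove minimality I would show that none of the thirteen retained generators is itself a normally ordered polynomial in the others, via a graded dimension count. Using the multisymmetric-function description I compute the dimension of the weight-$w$ component of the orbifold for each $w\le 12$, compute the dimension of the subspace spanned by normally ordered products and derivatives of the proposed generators, and verify that the two agree exactly when all thirteen generators are present but drop by one whenever any single generator is deleted. Equivalently, one compares the graded character of the orbifold, obtained from the $S_3$-invariant part of the Virasoro characters, against that of the free strongly generated algebra on the thirteen fields, confirming that a genuinely new generator is forced at each of the listed weights.
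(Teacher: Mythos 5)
Your proposal follows essentially the same route as the paper: pass to the large-$c$ limit $\V_\infty$, invoke Linshaw's transfer principle for generic $c$, reduce by classical invariant theory to linear, quadratic, and cubic invariants, decouple all quadratics of weight $\geq 14$ and all cubics of weight $\geq 13$ (plus one weight-$12$ cubic) via explicit normally ordered relations, and certify minimality by comparing graded characters — and your predicted cutoffs and final weight list all match the paper's. The only caveat is that the computational heart of the argument (the paper's parameterized family of relations $R_1(\mathbf{a},\mathbf{m})$ for the cubics and the inductive use of the operator $(W_{0,0})_{(1)}$ for the quadratics) is asserted in your outline rather than exhibited, and that is precisely where nearly all of the paper's labor lies.
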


In Section 4 we switch our attention to a specific simple Virasoro vertex algebra, the unitary minimal model $\L_{\frac12}$ (Ising VOA), and its 
$S_3$-orbifold subalgebra $(\L_{\frac12}^{\otimes 3})^{S_3}$. This vertex algebra admits the well-known fermionic realization $\L_{\frac12} \hookrightarrow \mathcal{F}$, 
so that $\L_{\frac12}$ is the even part of the the rank one free fermion vertex algebra $\F$. Previously the first two authors and Wauchope investigated $(\mathcal{F}^{\otimes 3})^{S_3}$ and  determined its type \cite{MPW}. 
Although it seems now natural to consider a suitable $(\mathbb{Z}_2)^3$-orbifold of $(\mathcal{F}^{\otimes 3})^{S_3}$ to study $(\L_{\frac12}^{\otimes 3})^{S_3}$ we found this approach to be very cumbersome. For this reason, we do not use fermionic construction here and instead employ the null vectors of weight $6$ in $\L_{\frac 12}$ to obtain additional relations. This allows us to reduce several generators beyond the generic case. Our main result in this direction is
\begin{thm} The orbifold subalgebra $(\L_{\frac12}^{\otimes 3})^{S_3}$ is a (unitary) $W$-algebra of type $(2,4,6,8)$.
\end{thm}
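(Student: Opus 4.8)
The plan is to leverage Theorem~\ref{main} together with the weight~$6$ singular vector of $\V_{\frac12}$ as the source of all additional relations. Since $\L_{\frac12}$ is the simple quotient of $\V_{\frac12}$, the tensor product $\L_{\frac12}^{\otimes 3}$ is an $S_3$-equivariant quotient of $\V_{\frac12}^{\otimes 3}$, so $(\L_{\frac12}^{\otimes 3})^{S_3}$ is a quotient of $(\V_{\frac12}^{\otimes 3})^{S_3}$. By Theorem~\ref{main} it is therefore strongly generated by the images of the generic generators, namely vectors of weight $2,4,6,6,8,8,9,10,10,11,12,12,12$. The task is to show that, modulo the new relations, one may discard one generator of weight $6$, one of weight $8$, and all generators of weights $9,10,11,12$, leaving a strong set of type $(2,4,6,8)$.

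First I would write the singular vector $u\in\V_{\frac12}$ of conformal weight $6=(p-1)(q-1)$ for $(p,q)=(3,4)$ explicitly in the weight~$6$ basis $L(-6)\1,\,L(-4)L(-2)\1,\,L(-3)^2\1,\,L(-2)^3\1$, and record that its image in $\L_{\frac12}$ vanishes. Placing $u$ in each tensor factor yields null vectors $u^{(1)},u^{(2)},u^{(3)}\in\L_{\frac12}^{\otimes 3}$; symmetrizing these over $S_3$, and symmetrizing their mixed normally ordered products with the generating fields, produces a family of null vectors lying in $(\L_{\frac12}^{\otimes 3})^{S_3}$. Expanding each such null vector in the generic strong generating set then gives a linear relation among normally ordered monomials in the generators.

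The core of the argument is to organize these relations by conformal weight and solve them triangularly. Working upward from weight~$6$, at each weight $w\in\{6,8,9,10,11,12\}$ I would show that the designated redundant generator occurs with nonzero coefficient in some symmetrized null relation whose remaining terms are all normally ordered products (with derivatives) of already-retained generators of weight $2,4,6,8$; solving for the redundant generator then realizes it inside the subalgebra they generate, and it can be dropped. The main obstacle is exactly this step: one must verify that the single weight~$6$ null vector, together with its descendants and mixed products, produces \emph{enough} independent relations to eliminate all seven extra generators, and that the elimination proceeds consistently in increasing weight, so that removing a generator at weight $w$ never obstructs removing another at weight $w'>w$. In practice this demands computing OPEs and normally ordered products of invariant fields up to weight~$12$, which I would carry out with computer algebra and then confirm that each claimed decoupling relation holds identically on the relevant graded subspaces rather than merely on low-lying states.

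Finally I would settle minimality and unitarity. For minimality it suffices to check at each weight $w\in\{2,4,6,8\}$ that the corresponding generator is \emph{not} a normally ordered polynomial in the strictly lower generators; this follows by comparing the dimension of the weight-$w$ graded piece of $(\L_{\frac12}^{\otimes 3})^{S_3}$ with the dimension of the span of normally ordered monomials built from lower generators, read off from the graded character of the orbifold. Unitarity is immediate: $\L_{\frac12}$ is a unitary vertex operator algebra, unitarity passes to $\L_{\frac12}^{\otimes 3}$, and the fixed-point subalgebra under the unitary action of the finite group $S_3$ is again unitary. Together these establish that $(\L_{\frac12}^{\otimes 3})^{S_3}$ is a unitary $W$-algebra of type $(2,4,6,8)$.
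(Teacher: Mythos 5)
Your overall strategy---using the weight-$6$ null vector of $\L_{\frac12}$ as the source of all extra relations and eliminating generators weight by weight---is the same idea the paper uses, but your starting point contains a genuine gap. Theorem \ref{main} is a statement about \emph{generic} central charge: it is obtained via Proposition \ref{linshaw} by transferring a generating set from the limit algebra $\V_\infty$ to $\V_c$ for all but countably many $c$, and the decoupling relations have coefficients that are rational functions of $c$ which may degenerate at special points. The minimal-model value $c=\frac12$ is precisely the kind of point that may be excluded, so you are not entitled to conclude that $(\V_{1/2}^{\otimes 3})^{S_3}$, or its quotient $(\L_{1/2}^{\otimes 3})^{S_3}$, is strongly generated in weights $2,4,6^2,8^2,9,10^2,11,12^3$. (The passage from $\V$ to $\L$ and then to $S_3$-invariants is fine, since taking invariants under a finite group in characteristic zero is exact; the problem is solely the genericity hypothesis.) Without a valid finite strong generating set to start from, the triangular elimination in your second and third paragraphs has nothing to act on.

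The paper avoids this by starting from the result of \cite{MPS} on the $3$-cycle orbifold, which was established there for \emph{every} central charge. It diagonalizes the $3$-cycle via $L,U_1,U_2$, transports the weight-$6$ singular vectors into the invariant fields $S,S_1,S_2$ and the null vectors $V_8^{\pm},V_9^{\pm},V_{10}^{\pm},Q_{10}^{\pm}$, and first proves that $(\L_{1/2}^{\otimes 3})^{\mathbb{Z}_3}$ is of type $(2,4,5,6,6,7,8,9)$ (Theorem \ref{chalfz3}). It then realizes $(\L_{1/2}^{\otimes 3})^{S_3}$ as the residual $\mathbb{Z}_2$-orbifold under $U_1\leftrightarrow U_2$; there the even generators are $L,W_4,W_6,W_8$ and the odd ones are $W_5,W_7,W_9,C_6^-$, so invariant theory a priori forces one to retain the infinite family of quadratics $\nop{(\partial^a W_{2m+1})(\partial^b W_{2n+1})}$ and $\nop{(\partial^a W_{2m+1})(\partial^b C^-_{6})}$; these are removed by checking that $L,W_4,W_6,W_8$ close under OPE and that all OPEs of odd generators land in the subalgebra they generate. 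If you replace your first step by this (or by any argument valid at $c=\frac12$ specifically), the remainder of your plan is workable; your minimality check via graded characters and your unitarity argument are fine as stated.
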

We note that this vertex algebra is not isomorphic to a simple principal $W$-algebra of type $B_4$ (or $C_4$ by Feigin-Frenkel duality) nor the $\mathbb{Z}_2$-orbifold of the principal affine algebra of type $D_4$. Furthermore,  this algebra is not isomorphic to the parafermionic orbifold algebra $N_{10}(sl_2)^{\mathbb{Z}_2}$, which is also unitary of central charge $\frac32$.

Finally, in Section 5 we consider the $S_3$-orbifold algebra of $\L_{-22/5}$, the famous $(2,5)$-minimal model,  continuing our discussion from \cite{MPS}. 
Interestingly this orbifold algebra is related not to one but two affine vertex algebra of type $G_2$.
The next result was previously announced in \cite{MPS}. We denote by $W_k(\frak g_2,f_{prin})$ the simple principal affine $W$-algebra associated to $G_2$ at level $k$. 
\begin{thm} \label{25} We have an isomorphism 
$$(\L_{-\frac{22}{5}}^{\otimes 3})^{S_3} \cong W_{-\frac{19}{6}}(\frak g_2,f_{prin}).$$
In particular, the $W$-algebra on the right-hand side is regular (i.e. rational and lisse) \footnote{Using this isomorphism  we can also show that the affine $W$-algebra has precisely $24$ irreducible modules.}.
\end{thm}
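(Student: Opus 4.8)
The plan is to identify both sides as the \emph{same} simple vertex algebra of type $(2,6)$ at central charge $c=3\cdot(-\tfrac{22}{5})=-\tfrac{66}{5}$, pinning the isomorphism down by matching the single nontrivial structure constant. First I would reduce the left-hand side to type $(2,6)$. By Theorem~\ref{main} the generic orbifold $(\V_c^{\otimes 3})^{S_3}$ is of type $(2,4,6^2,8^2,9,10^2,11,12^3)$. Passing to the simple quotient $\L_{-\frac{22}{5}}$ brings in the weight-$4$ singular vector of the $(2,5)$ vacuum module (the analogue, one weight lower, of the weight-$6$ null vector used for Ising), which in each tensor factor is a linear relation between $L^{(i)}(-2)^2\1$ and $L^{(i)}(-4)\1$. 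Symmetrizing these relations and their descendants over $S_3$ and feeding them into the generic strong generators, I would show that the weight-$4$ generator, one of the two weight-$6$ generators, and all generators of weight $8,9,10,11,12$ become normally ordered polynomials in $\o=\sum_i\o^{(i)}$ and a single weight-$6$ primary $W$. This exhibits $(\L_{-\frac{22}{5}}^{\otimes 3})^{S_3}$ as a simple vertex algebra of type $(2,6)$.

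Next I would treat the right-hand side. The principal $W$-algebra $W_k(\mathfrak g_2,f_{prin})$ is freely generated of type $(2,6)$, the weights being the exponents $1,5$ of $G_2$ raised by one. Using the standard central charge formula for principal $W$-algebras together with the $G_2$ data $h^\vee=4$, $(\rho,\rho)=\tfrac{14}{3}$, $(\rho,\rho^\vee)=8$, $(\rho^\vee,\rho^\vee)=14$, a short computation produces a quadratic equation for $t=k+h^\vee$ with roots $t=\tfrac56$ and $t=\tfrac25$; the root $t=\tfrac56$ gives $c=-\tfrac{66}{5}$ and hence $k=-\tfrac{19}{6}$. The second root $t=\tfrac25$ is the Feigin--Frenkel dual level (note $\tfrac56\cdot\tfrac25=\tfrac13=1/r^\vee$), consistent with the alternative description mentioned in the introduction.

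Finally I would match the two algebras. For a simple vertex algebra of type $(2,6)$ with a normalized weight-$6$ primary $W$, associativity — the Jacobi identity applied to the weight-$12$ part of the $W\times W$ OPE — determines all OPE coefficients as rational functions of $c$ and the self-coupling $\lambda^2=(C^{W}_{WW})^2$, and constrains $\lambda^2$ to finitely many values at each central charge. I would normalize $W$ on both sides and check that the self-coupling computed in $(\L_{-\frac{22}{5}}^{\otimes 3})^{S_3}$ coincides with that of $W_{-\frac{19}{6}}(\mathfrak g_2,f_{prin})$; since both are simple, $\ZZ_{\ge 0}$-graded with one-dimensional top spaces and realize the same $\mathcal W(2,6)$ OPE data at $c=-\tfrac{66}{5}$, they are both the unique simple quotient of the corresponding universal $\mathcal W(2,6)$ algebra and hence isomorphic. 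The regularity claim then follows: $\L_{-\frac{22}{5}}$ is regular and $S_3$ is finite solvable, so $(\L_{-\frac{22}{5}}^{\otimes 3})^{S_3}$ is rational and $C_2$-cofinite by the orbifold results of Carnahan--Miyamoto and Miyamoto, and regularity transports across the isomorphism to $W_{-\frac{19}{6}}(\mathfrak g_2,f_{prin})$.

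The hard part will be the type reduction together with the structure-constant computation: verifying that the single weight-$4$ Lee--Yang null vector forces the simultaneous decoupling of every generator of weight exceeding $6$ (save one), and then extracting the correctly normalized weight-$6$ self-coupling from the full weight-$12$ part of the $W\times W$ OPE, where essentially all of the OPE bookkeeping is concentrated.
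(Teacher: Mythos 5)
Your overall strategy --- reduce the orbifold to type $(2,6)$, locate the level by matching the central charge $c=-\tfrac{66}{5}$ (your computation of the two roots $k=-\tfrac{19}{6}$ and its Feigin--Frenkel dual $k=-\tfrac{18}{5}$ is correct), compare OPE data, and invoke simplicity of the orbifold via Carnahan--Miyamoto --- is the same as the paper's, and your regularity argument at the end matches as well. The paper does not re-derive the type-$(2,6)$ structure of $(\L_{-22/5}^{\otimes 3})^{S_3}$ from the weight-$4$ null vector as you propose (it imports the explicit weight-$2$ and weight-$6$ generators from \cite{MPS}), and it reaches the left-hand side through the intermediate identification $W_{-\frac{16}{5}}(\frak g_2,f_{sub})\cong \L_{-22/5}^{\otimes 3}$, but these are organizational differences rather than mathematical ones.

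The genuine gap is in your final identification step. You propose to match only the single self-coupling $\lambda^2=(C^W_{WW})^2$ and then conclude that both algebras are ``the unique simple quotient of the corresponding universal $\mathcal W(2,6)$ algebra.'' This presupposes a rigorously constructed two-parameter universal $\mathcal W(2,6)$ vertex algebra in which all remaining OPE coefficients are determined by $(c,\lambda)$, together with the knowledge that $c=-\tfrac{66}{5}$ is not a degenerate point for that family. No such object is available in the literature at the level of rigor needed (the paper's own remark only asserts uniqueness of a universal $\mathcal W(2,6)$ for \emph{generic} $c$, citing physics literature), so as written the step ``same $c$ and same $\lambda^2$ implies isomorphic'' does not close. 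The paper's route avoids this entirely: it computes the \emph{full} $W_{(n)}W$ OPE (all poles, listed explicitly in Section 5 and the appendix) on both sides, and then uses the fact that $W^{-\frac{19}{6}}(\frak g_2,f_{prin})$ is freely generated of type $(2,6)$ to produce a homomorphism $W^{-\frac{19}{6}}(\frak g_2,f_{prin})\to(\L_{-22/5}^{\otimes 3})^{S_3}$ sending generators to generators; surjectivity follows from strong generation in weights $2$ and $6$, and simplicity of the orbifold forces the map to factor through $W_{-\frac{19}{6}}(\frak g_2,f_{prin})$. To repair your argument, replace the appeal to a universal $\mathcal W(2,6)$ by this universal-property argument, which in particular requires verifying agreement of every pole of the $W\times W$ OPE, not just the weight-$6$ structure constant.
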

Observe that the level $-\frac{19}{6}$ is not $\frak g_2$ admissible. This gives an example of a regular principal affine $W$-algebra outside admissible series.

We also have an isomorphism of simple VOAs $W_{-\frac{18}{5}}(\frak g_2,f_{prin}) \cong W_{-\frac{19}{6}}(\frak g_2,f_{prin})$, which is basically an instance of the Feigin-Frenkel duality of affine $W$-algebras. Moreover, using the construction of the subregular affine $W$-algebra of type $G_2$ (this algebra was also studied in J. Fasquel's PhD thesis  \cite{F}) we can easily see that $W_{-\frac{16}{5}}(g_2,f_{sub})=\L_{-22/5}^{\otimes 3}$ and therefore we can view $W_{-\frac{19}{6}}(\frak g_2,f_{prin})$ as an $S_3$-orbifold, i.e we get:
\begin{cor} 
$$W_{-\frac{16}{5}}(\frak g_2,f_{sub})^{S_3} \cong W_{-\frac{19}{6}}(\frak g_2,f_{prin}).$$
\end{cor}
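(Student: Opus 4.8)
The plan is to deduce the Corollary from Theorem~\ref{25} together with the identification of the simple subregular $W$-algebra $W_{-16/5}(\frak{g}_2, f_{sub}) \cong \L_{-22/5}^{\otimes 3}$ asserted in the paragraph preceding the statement. Granting that isomorphism of vertex algebras, the remaining steps are short: transport the permutation action of $S_3$ on the three tensor factors of $\L_{-22/5}^{\otimes 3}$ through the isomorphism to an action by automorphisms on $W_{-16/5}(\frak{g}_2, f_{sub})$, take fixed points on both sides to obtain $W_{-16/5}(\frak{g}_2, f_{sub})^{S_3} \cong (\L_{-22/5}^{\otimes 3})^{S_3}$, and then apply Theorem~\ref{25} to identify the right-hand side with $W_{-19/6}(\frak{g}_2, f_{prin})$. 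I would emphasize that the $S_3$ on the left is \emph{defined} through this identification and not through any diagram symmetry of $G_2$ (which is trivial), so the left-hand side must be read as the orbifold for the transported permutation action.

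The substance therefore lies in the identification $W_{-16/5}(\frak{g}_2, f_{sub}) \cong \L_{-22/5}^{\otimes 3}$. First I would assemble the Kac--Roan--Wakimoto data for the subregular orbit $G_2(a_1)$: fix an $\frak{sl}_2$-triple $(e, 2x, f_{sub})$, compute the $\mathrm{ad}\,x$-grading of $\frak{g}_2$, and from it read off that the universal subregular $W$-algebra is strongly generated by $\dim\frak{g}_2^{f_{sub}} = 4$ fields, together with its central charge at $k+h^\vee = \tfrac{4}{5}$. A first consistency check is that this central charge equals $3\,c_{2,5} = -\tfrac{66}{5}$, matching $\L_{-22/5}^{\otimes 3}$. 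The nontrivial step is to pass to the simple quotient: at this special level the null vectors should force $W_{-16/5}(\frak{g}_2, f_{sub})$ to decompose into three mutually commuting copies of the $(2,5)$-minimal model $\L_{-22/5}$, collapsing the four universal generators to three commuting weight-$2$ fields. This is exactly the structure worked out in Fasquel's thesis \cite{F}; alternatively one could confirm it by comparing graded characters and low-weight OPEs of the two sides.

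The main obstacle is precisely this decomposition and the visibility of the $S_3$-symmetry. Matching central charges and characters is necessary but not sufficient: one must genuinely locate three mutually commuting simple Virasoro subalgebras $\L_{-22/5}$ inside $W_{-16/5}(\frak{g}_2, f_{sub})$ whose joint vertex subalgebra is the whole algebra, and then verify that the symmetric group permuting them acts by honest vertex algebra automorphisms. Once these three commuting Virasoro fields and their $S_3$-symmetry are established, the transported $S_3$ is exactly the permutation action on the factors of $\L_{-22/5}^{\otimes 3}$, and the passage to fixed points followed by the appeal to Theorem~\ref{25} is formal.
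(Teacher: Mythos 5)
Your proposal follows essentially the same route as the paper: the Corollary is obtained by combining Theorem~\ref{25} with the identification $W_{-\frac{16}{5}}(\frak g_2,f_{sub})\cong \L_{-22/5}^{\otimes 3}$ (Proposition~\ref{subreg}), which the paper proves exactly as you sketch --- by exhibiting, in the quotient by the ideal generated by the weight-$3$ generator $G$, three mutually commuting conformal vectors of central charge $-\frac{22}{5}$ and showing the maximal ideal is generated by $G$ together with the three weight-$4$ singular vectors --- and then transporting the permutation action and taking fixed points. The only point you raise that the paper treats differently is the meaning of the $S_3$ on the left-hand side: rather than being merely the transported action, it is intrinsic, arising from the component group $A(f_{sub})\cong S_3$ of the subregular orbit, and the paper notes that ${\rm Aut}(W^k(\frak g_2,f_{sub}))=S_3$, so no ambiguity remains.
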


\smallskip

{\bf Acknowledgments:} The first named author would like to thank J. Fasquel for discussion about affine $W$-algebras of type $\frak{g}_2$.

\section{Setup and Preliminary results}

As already mentioned in the introduction, $(\V_c,Y,\omega, \1)$ denotes the universal Virasoro vertex algebra with $\omega=L(-2){\1}$ and ${\1}$ denotes the vacuum vector.
Throughout the paper we will be working with $\V_c^{\otimes n}$ with $n=3$.
For convenience, we suppress the tensor product symbol and let  $$L_i(-m)\1:=\underbrace{\1 \otimes  \cdots  \1} _{(i-1)-{\rm factors}} \otimes L(-m) \1 \otimes \underbrace{ \1 \otimes \cdots \otimes \1}_{(n-i)-{\rm factors}} \in \V_c^{\otimes n},$$  such that
$\V_c^{\otimes n}=\langle L_1(-2)\1, \cdots ,L_n(-2)\1 \rangle$.
Thus $\omega=\omega_1+ \cdots + \omega_n$ is the total conformal vector in $\V_c^{\otimes n}$.
Using this notation, the natural action of $S_n$ on $\V_c^{\otimes n}$ is given by 
permuting tensor factors, that is 
\be
\sigma\cdot L_{i_1}(m_1)\cdots L_{i_k}(m_k)\1=L_{\sigma(i_1)}(m_1)\cdots L_{\sigma(i_k)}(m_k)\1,
\ee
for $1\leq i_j\leq n$, $m_j<-1$, and $\sigma\in S_n$. 

\begin{defn}
We say that $0 \neq v \in V$ is {\em primary} of conformal weight $r$ if $L(n)v=0$, $n \geq 1$ and $L(0)v=r v$.
In our work we sometimes consider vertex algebras $V$ strongly generated by the Virasoro vector and several primary fields
of conformal weight $r_1,...,r_k$. If so, using physics' terminology, we say that $V$ is a $W$-algebra of type
$(2,r_1,...,r_k)$, where $r_i$ can be repeated several times \footnote{If so we often write $r_i^n$, indicating that there are $n$ generators of conformal weight $r_i$.}. If generators of weight $r_i$ are not necessarily primary we still use the same notation but we omit calling $V$ a $W$-algebra.
\end{defn}

For a vertex algebra $V$ denote by ${\rm gr}(V)$ the associated graded Poisson algebra of $V$ \cite{Ar2, HLi}. We have a natural linear isomorphism 
\be\label{linearisom}
\V_c^{\otimes n}\cong {\rm gr}(\V_c^{\otimes n}) \cong \CC[x_i(m) | 1\leq i \leq n, m\geq 0]\ee
induced by $L_i(-m-2)\mapsto x_i(m)$ for $m\geq 0$. 
This algebra comes equipped with a derivation $\partial$ such that it is compatible with the translation operator in $\V_c^{\otimes n}$ given by $D(v)=v_{-2}\mathbb{1}$.
Then we have a standard result \cite{HLi,L2} (cf. \cite{Ar2}).
\begin{lem}\label{reconstruction}
Let $V$ be a vertex algebra with a ``good" $\mathbb{Z}_{\geq 0}$ filtration. If $\{\tilde{a}_i|i\in I\}$ generates $gr(V)$ then $\{a_i | i\in I\}$ strongly generates 
$V$, where $a_i$ and $\tilde{a}_i$ are related via the natural linear isomorphism described by the 
$\mathbb{Z}_{\geq 0}$ filtration.
\end{lem}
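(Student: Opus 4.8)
The plan is to prove this by induction on the filtration degree, exploiting the fact that passing to the associated graded is a morphism of differential commutative algebras modulo lower-order corrections. First I would make the two statements precise. By a good $\mathbb{Z}_{\geq 0}$-filtration $V_{(0)}\subseteq V_{(1)}\subseteq\cdots$ I mean one for which the translation operator $D$ preserves each $V_{(p)}$, the products satisfy $a_n b\in V_{(p+q)}$ for all $n$ and $a_n b\in V_{(p+q-1)}$ for $n\geq 0$ whenever $a\in V_{(p)}$, $b\in V_{(q)}$, and for which ${\rm gr}(V)=\bigoplus_p V_{(p)}/V_{(p-1)}$ is the associated commutative Poisson vertex algebra, with product induced by the normally ordered (i.e.\ $(-1)$-st) product and derivation $\partial$ induced by $D$. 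The assertion that $\{\tilde a_i\}$ generates ${\rm gr}(V)$ then means that every element of ${\rm gr}(V)$ is a polynomial in the derivatives $\partial^k\tilde a_i$, while the desired conclusion is that $V$ is spanned by iterated normally ordered products $:(\partial^{k_1}a_{i_1})\cdots(\partial^{k_r}a_{i_r}):$ of the lifts and their $D$-derivatives.

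Write $V^{\mathrm{sg}}$ for the strong subalgebra generated by $\{a_i\}$, i.e.\ the span of such iterated Wick products; the goal is $V^{\mathrm{sg}}=V$. I would show $V_{(p)}\subseteq V^{\mathrm{sg}}$ by induction on $p$, the base case $p=0$ being immediate. For the inductive step, take $a\in V_{(p)}$ and consider its symbol $\sigma_p(a)\in V_{(p)}/V_{(p-1)}$. By hypothesis this symbol is a polynomial $P$ in the $\partial^k\tilde a_i$. I then lift $P$ termwise to an element $b\in V$ by replacing each $\tilde a_i$ with $a_i$, each $\partial$ with $D$, and the commutative product of ${\rm gr}(V)$ with the normally ordered product taken in some fixed chosen order. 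The good-filtration axioms guarantee that $b\in V_{(p)}$ and that $\sigma_p(b)=P=\sigma_p(a)$, so $a-b\in V_{(p-1)}$. The inductive hypothesis places $a-b$ in $V^{\mathrm{sg}}$, while $b\in V^{\mathrm{sg}}$ by construction, and hence $a\in V^{\mathrm{sg}}$, completing the induction.

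The hard part will be the single assertion that the symbol of the lifted Wick product $b$ is exactly the target monomial $P$, and this is precisely where the good-filtration hypotheses do their work. One needs that $D$ preserves filtration degree so that $D^k a_i$ has symbol $\partial^k\tilde a_i$; that for $a\in V_{(r)}$, $b\in V_{(s)}$ the product $a_{-1}b$ lies in $V_{(r+s)}$ with symbol equal to the product of symbols, while the positive-mode corrections $a_n b$ ($n\geq 0$) drop into $V_{(r+s-1)}$; and that reordering or reassociating the factors of an iterated Wick product --- necessary because ${\rm gr}(V)$ is commutative and associative whereas the normally ordered product in $V$ is neither --- alters the result only by terms of strictly lower filtration degree. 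Since every such discrepancy lands in $V_{(p-1)}$, it is absorbed wholesale by the inductive hypothesis, so none of this bookkeeping needs to be carried out explicitly. This is the argument of \cite{HLi, L2}, which I would cite for the detailed verification of the filtration estimates.
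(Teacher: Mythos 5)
Your argument is correct and is precisely the standard proof: the paper itself offers no proof of this lemma, stating it as a known result and deferring to \cite{HLi,L2} (cf.\ \cite{Ar2}), which is exactly the induction on filtration degree that you reconstruct. Your identification of the one substantive point --- that the symbol of the reordered, reassociated Wick lift of a monomial equals that monomial, with all discrepancies (positive-mode corrections, reordering, reassociation) falling into $V_{(p-1)}$ and hence absorbed by the inductive hypothesis --- is the content of the ``good filtration'' axioms in those references, so nothing is missing.
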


Because our main computation tool is the OPE package \cite{T}, throughout we will switch between working directly in the setting of the vertex operator algebra $\V_c^{\otimes n}$ and its copy inside $(\text{End }\V_c^{\otimes n})[[z,z^{-1}]]$ (fields) via the vertex operator map $$Y(\cdot,z):\V_c(n)\to (\text{End }\V^{\otimes n}_c(n))[[z,z^{-1}]],$$
i.e. we use the field-state correspondence.  Under this map we have 
\be\label{firstorbifoldgenerators}
u_k(m_1,\dots,m_k):=\sum_{i=1}^n L_i(-2-m_1)\cdots L_i(-2-m_k)\1,
\ee 
\be\begin{aligned}
U_k(m_1,\dots,m_k):=&Y(u_k(m_1,\dots,m_k),z)\\=&\frac{1}{(m_1-1)!}\cdots \frac{1}{(m_k-1)!}\sum_{i=1}^n\nop{\partial_z^{m_1}L_i(z)\cdots\partial_z^{m_k}L_i(z)},\end{aligned}\ee
where by $\nop{-}$ the normal ordered product and we will often suppress the formal variable $z$ and write $(\partial^m W):=\partial_z^m W(z)$, where $W(z)$ is any field.
Using this shorthand notation, we recall some basic facts about relations among normal ordered products (here $a,b$ and $c$ are arbitrary vectors in a vertex algebra):
\begin{equation}\label{nop1}
\nop{\nop{ab}c}  = \nop{abc} + \sum_{k \ge 0}\frac{1}{(k+1)!} \left( \nop{(\partial^{k+1}a)(b_{(k)}c)}+ \nop{(\partial^{k+1} b)(a_{(k)}c)}\right),
\end{equation}
\begin{equation}\label{nop2}
\nop{a_{(n)}(\nop{bc})} = \nop{(a_{(n)}b)c} + \nop{b(a_{(n)}c)}+ \sum_{k=1}^n {n \choose k} (a_{(n-k)}b)_{(k-1)}c,
\end{equation}
\begin{equation}\label{nop3}
(\nop{ab})_{(n)}c = \sum_{k \ge 0} \frac{1}{k!} \nop{(\partial^ka)(b_{(n+k)}c)} + \sum_{k \ge 0} b_{(n-k-1)}(a_{(k)}c).
\end{equation}
\color{black}

\section{The orbifold subalgebra $\left(\V_c^{\otimes 3}\right)^{S_3}$}

\subsection{Large $c$ limit of $\V_c$}
In order to analyze the orbifold subalgebra $\left(\V_c^{\otimes 3}\right)^{S_3}$ for generic values of $c$ we pass to the generalized free field limiting algebra as in \cite{L3}. 
From the OPE relations for $L(z)$, after rescaling $t:=\sqrt{c}$, $\alpha(z):=\frac{L(z)}{t}$, we obtain 
$$\alpha(z) \alpha(w) \sim \frac{1}{(z-w)^4}+\frac{2 \alpha(w)}{t (z-w)^2}+\frac{\partial \alpha(w)}{t (z-w)}.$$
Observe that for $t \to +\infty$ the limit is well-defined and we obtain an OPE algebra $\V_\infty:=\langle \alpha \rangle$, where
\begin{equation} \label{degen}
\alpha(z) \alpha(w) \sim \frac{1}{(z-w)^4}.
\end{equation}
Observe that $gr(\V_\infty) \cong gr(\V_c)$.
Then using the result of \cite{L3}, suitably adjusted, applied in our setup, we get:
\begin{prop} \label{linshaw} Let $u_i$ $i \in I$ be a strong set of generators of $(\V_\infty^{\otimes n})^{S_n}$ then for at most countably many values $c$ of the central charge, 
there is a strong generating set $t_i$, $ i \in I$ of $(\V_c ^{\otimes n})^{S_n}$ with $deg(t_i)=deg(u_i)$. 
\end{prop}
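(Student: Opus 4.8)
The plan is to exhibit the whole family as the fibers of a single vertex algebra over the polynomial ring $\CC[q]$ in the deformation variable $q:=t^{-1}=c^{-1/2}$, with special fiber $q=0$ equal to $\V_\infty^{\otimes n}$, and then to invoke the semicontinuity principle of \cite{L3}: a generating property that holds at $q=0$ persists for all $q$ outside a countable set. The crucial point, and the reason one cannot expect generation for \emph{all} $c$, is that the $u_i$ need not generate ${\rm gr}$; a reduced strong generating set is produced by ``decoupling'' relations in which the top symbols of several normal-ordered products cancel, and the coefficients enabling such a cancellation are $q$-dependent and may degenerate at special values of $c$.

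First I would build the flat family. Using the rescaled field $\alpha=L/t$ of \eqref{degen}, the OPE coefficients of $\alpha$, and hence the coordinates of all iterated products, lie in $\CC[q]$. Thus the rescaled copies of $\V_c^{\otimes n}$ and $\V_\infty^{\otimes n}$ are the fibers of a vertex algebra $\mathcal{A}$ over $\CC[q]$ which is \emph{free} as a $\CC[q]$-module, a basis being the $c$-independent ordered monomials in the modes of the $\alpha_i$ (the analogue of the basis in \eqref{linearisom}). The $S_n$-action is $\CC[q]$-linear and merely permutes these basis monomials through the index $i$, so the Reynolds idempotent $\frac{1}{|S_n|}\sum_{\sigma}\sigma$ is defined over $\CC[q]$ and $\mathcal{A}^{S_n}$ is again free over $\CC[q]$. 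Because $S_n$ permutes a $q$-independent basis, each graded piece $\mathcal{A}^{S_n}_{[w]}$ is free of a rank $d_w$ independent of $q$ (equivalently ${\rm gr}(\V_\infty)\cong{\rm gr}(\V_c)$). For each $i$ I then let $t_i\in\mathcal{A}^{S_n}$ be the constant section whose value in every fiber equals $u_i$ under this identification of weight spaces; then $t_i|_{q=0}=u_i$ and $\deg t_i=\deg u_i$ (passing between the $L$- and $\alpha$-normalizations only rescales generators by powers of $t$ and changes neither weights nor spans).

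Next I would reduce strong generation to a rank condition, weight by weight. Fix a weight $w$ and list the finitely many normal-ordered monomials of weight $w$ in the $t_i$ and their derivatives (only the $u_i$ with $\deg u_i\le w$ occur). Since the vertex operations of $\mathcal{A}$ have structure constants in $\CC[q]$, expanding these monomials in the $\CC[q]$-basis of $\mathcal{A}^{S_n}_{[w]}$ produces a matrix $M_w(q)$ with entries in $\CC[q]$. By the definition of strong generation, the $t_i$ strongly generate the fiber over a given $q$ precisely when $M_w(q)$ has rank $d_w$ for every $w$. At $q=0$ the monomials specialize to the corresponding monomials in the $u_i$, which by hypothesis span $(\V_\infty^{\otimes n})^{S_n}_{[w]}$; hence $M_w(0)$ has rank $d_w$, so some $d_w\times d_w$ minor of $M_w(q)$ is a polynomial in $q$ that is nonzero at $q=0$, and therefore vanishes for only finitely many $q$.

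Finally, taking the union over $w\in\ZZ_{\ge0}$ of these finite exceptional sets yields a countable set of parameters outside of which every $M_w(q)$ has rank $d_w$ simultaneously; translating through $c=q^{-2}$, the $t_i$ strongly generate $(\V_c^{\otimes n})^{S_n}$ for all but at most countably many $c$, with $\deg t_i=\deg u_i$. I expect the main obstacle to be the first step: verifying that the rescaled family really is a \emph{flat} (free) vertex algebra over $\CC[q]$ with polynomial structure constants and a $q$-independent basis, so that the minors appearing in the last step are genuine polynomials in $q$ rather than merely rational or analytic functions. This is exactly the content of Linshaw's free-field-limit construction \cite{L3}, which must be transported to the present tensor-power and $S_n$-invariant setting.
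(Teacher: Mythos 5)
Your proof is correct and is essentially the argument the paper relies on: the paper gives no details for Proposition \ref{linshaw} and simply invokes Linshaw's deformation result from \cite{L3} ``suitably adjusted,'' and your flat-family-over-$\CC[q]$ construction with the weight-by-weight rank/minor semicontinuity argument is precisely that adjustment spelled out. The one point worth stating explicitly is that each weight space of $\mathcal{A}^{S_n}$ is a finitely generated free $\CC[q]$-module (so the matrices $M_w(q)$ are finite and their maximal minors are honest polynomials), which holds here because the graded pieces of $\V_c^{\otimes n}$ are finite dimensional with a $q$-independent monomial basis.
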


\begin{rem}
Due to order four pole in the OPE (\ref{degen}), $\V_\infty \neq \mathcal{H}$, where $\mathcal{H}$ is the rank one Heisenberg algebra (notice that the pairing between the modes of $T(z)$ is degenerate!).
For this reason we cannot simply use results from \cite{MPSh} on the structure of $\mathcal{H}(3)^{S_3}$ to analyze $(\V_\infty^{\otimes 3})^{S_3}$ .
\end{rem}

\subsection{Computations}
Define elements in $\V^{\otimes 3}_\infty=\langle \alpha_1,\alpha_2,\alpha_3 \rangle $:

\be\begin{aligned}
T_0&=\frac{1}{\sqrt{3}}(\alpha_1+ \alpha_2+ \alpha_3)\\
T_1&=\frac{1}{\sqrt{3}}(\alpha_1+\eta \alpha_2+\eta^2 \alpha_3)\\
T_2&=\frac{1}{\sqrt{3}}(\alpha_1+\eta^2 \alpha_2+\eta \alpha_3),
\end{aligned}\ee
where $\eta$ is a primitive third root of unity. Under this operation the algebra generated from $T_0$, $T_1$ and $T_2$ has the following nontrivial OPE
\be\begin{aligned}\label{OPE}
T_0(z)T_0(w)&\sim\frac{1}{(z-w)^4}\\
T_1(z)T_2(w)&\sim \frac{1}{(z-w)^4}.\end{aligned}\ee
Further, as this algebra is the limit of a Virasoro vertex operator algebra, we can define the weight of an element from our algebra as the conformal weight of one of its preimages under the limiting procedure. One can easily check that this implies that $\text{wt }T_i=2$ and this is well-defined. \color{black}
Now we set $\mathcal{A}=\left<T_0,T_1,T_2\right> $ and thus we have 
\be \V_\infty^{\otimes 3}\cong \mathcal{A}\ee
and 
\be (\V_\infty^{\otimes 3})^{S_3} \cong \mathcal{A}^{S_3}. \ee
From (\ref{OPE}) we have for $k \ge 0$
\begin{equation}
(\partial^m T_1)_{(k)}(\partial^n T_2) = \frac{(-1)^m(m+n+3)!}{3!}\delta_{k,m+n+3}\1
\end{equation}
and
\begin{equation}
(\partial^m T_2)_{(k)}(\partial^n T_1) = \frac{(-1)^m(m+n+3)!}{3!}\delta_{k,m+n+3}\1
\end{equation}

\color{black}
Now, following \cite{MPSh} we set 
\be\begin{aligned}W_{m,n}&=\nop{(\partial^{m} T_1)(\partial^{n} T_2)}+\nop{(\partial^{n} T_1)(\partial^{m} T_2)}\\
C_{\ell,m,n}&=\nop{(\partial^{\ell} T_1)(\partial^{m} T_1)(\partial^{n} T_1)}+\nop{(\partial^{\ell} T_2)(\partial^{m} T_2)(\partial^{n} T_2)}\end{aligned}
\ee
and it is clear that the orbifold subalgebra $\mathcal{A}^{S_3}$ is strongly generated by the fields $T_0$, $W_{m,n}$, and $C_{\ell,m,n}$ for $\ell,m,n\geq 0$.  Generators $W_{m,n}$ are elements of degree two in the associated graded algebra of ${\rm gr}(\mathcal{A}^{S_3})$ so these generators are called quadratic, likewise $C_{\ell,m,n}$ are called cubic. Using quantum corrections we will show that $\mathcal{A}^{S_3}$ has a finite strong set of generators (this is not the case with ${\rm gr}(\mathcal{A}^{S_3})$!). 
Let us fix our generating set: 
\begin{equation}
\mathcal{G}= \{ T_0, W_{m,n}, C_{\ell,m,n} : \ell, m,n \geq 0 \}.
\end{equation}
These generators are ordered according to the weight defined earlier. If $a \in \mathcal{G}$ can be written as a linear combination of $\partial^i b$, $i \geq 1$ and $\nop{ a_1 \cdots a_k}$ where ${\rm wt}(a_i) < {\rm wt}(b)$, then we say that $a$ can be expressed in terms of (generators) 
of lower weight. If so, then $\mathcal{G}$ can be reduced to $\mathcal{G} \setminus a$.
\color{black}
Let us explain the strategy of our proof of Theorem \ref{main}, which is split into two parts.
In the first part right below, we show that we can remove all but finitely many cubic generators from $\mathcal{G}$ (and still have a strong generating set).
In the second part we consider various quadratic relations together with the remaining cubic generators and show that all but finitely many quadratic generators 
remain in $\mathcal{G}$.

\subsection{Cubic generators}
As the first reduction, using techniques involving the translation operator similar to \cite{MPS} and \cite{L2} we see that our orbifold algebra is in fact strongly generated by $T_0$, $W_{2m,0}$, and $C_{m,n,0}$ for $m\geq n\geq 0$ so $\mathcal{G}$ can be replaced by this set. 
In particular, we have
$$C_{\ell,m,n}=(-1)^n\sum_{k=0}^n {n \choose k} C_{m+k,n+\ell-k,0}.$$

 Further reductions required a bit more work and our major calculation tool will be the identity
\be\begin{aligned}
\nop{W_{m_1,m_2}C_{n_1,n_2,n_3}}=&\frac{1}{6}\left(\left(\frac{(-1)^{m_1}}{m_1+n_1+4}+\frac{(-1)^{m_2}}{m_2+n_1+4}\right)C_{m_1+m_2+n_1+4,n_2,n_3}\right.\\
&+\left(\frac{(-1)^{m_1}}{m_1+n_2+4}+\frac{(-1)^{m_2}}{m_2+n_2+4}\right)C_{m_1+m_2+n_2+4,n_1,n_3}\\
&+\left.\left(\frac{(-1)^{m_1}}{m_1+n_3+4}+\frac{(-1)^{m_2}}{m_2+n_3+4}\right)C_{m_1+m_2+n_3+4,n_1,n_2}\right)\\
&+\Psi,
\end{aligned}\ee
where $\Psi$ is a linear combination of vectors that are degree 5 in the associated graded algebra. Using this we can construct a parameterized family of expressions 
\be\begin{aligned}\label{masterrel}
R_1(\mathbf{a},\mathbf{m})&=(a_1+a_2+a_3)\nop{W_{m_4,m_5}C_{m_1,m_2,m_3}}+(a_1+a_3+a_5)\nop{W_{m_3,m_5}C_{m_1,m_2,m_4}}\\
&-(a_1+a_2+a_3+a_4+a_5)\nop{W_{m_3,m_4}C_{m_1,m_2,m_5}}+a_5\nop{W_{m_2,m_4}C_{m_1,m_3,m_5}}\\
&-(a_1+a_4+a_5)\nop{W_{m_2,m_5}C_{m_1,m_3,m_4}}-(a_1+a_2+a_3)\nop{W_{m_1,m_5}C_{m_2,m_3,m_4}}\\
&+a_4\nop{W_{m_2,m_3}C_{m_1,m_4,m_5}}+a_3\nop{W_{m_1,m_4}C_{m_2,m_3,m_5}}\\
&+a_1\nop{W_{m_1,m_2}C_{m_3,m_4,m_5}}+a_2\nop{W_{m_1,m_3}C_{m_2,m_4,m_5}},
\end{aligned}\ee
where $\mathbf{a}=(a_1,\dots,a_5)$ and $\mathbf{m}=(m_1,\dots,m_5)$. The fact that $R(\mathbf{a},\mathbf{m})=0$  in the associated graded algebras allows us to create certain quantum corrections in order to write the $C_{a_1,a_2,a_3}$ generators in terms of lower weight generators. \color{black} Of particular interest will be the four relations $R_1(\mathbf{a},m,n,4,1,0)$, $R_1(\mathbf{a},m,n,3,2,0)$, $R_1(\mathbf{a},m,n,3,1,1)$, and $R_1(\mathbf{a},m,n,2,2,1)$ for $m>n>4$. The first two of these produce five linearly independent relations (built from choices of the $a_i$), while the last two produce three linearly independent relations. In total, we have 16 total relations at weight $m+n+12$. This is most fortunate as the expansion of these relations only consider sixteen cubic generating fields $C_{m+i,n+9-i,0}$ for $0\leq i\leq 9$, $C_{m+n+4+i,5-i,0}$ for $0\leq i\leq 3$, $C_{m+n,9,0}$, and $C_{m+n+9,0,0}$ and thus these relations may be used to write these generators in term of lower weight terms. The result is that all cubic generating fields of weight 23 and higher can be written in terms of cubic generators between weights 6 and 22. What remains is to minimize this fairly large set. This process can be achieved by carefully taking linear combination of our relations \eqref{masterrel}. For example, if we set $R_2(\mathbf{m})=R_1(1,0,0,0,0,\mathbf{m})$, we can find $b_1$,..., $b_8$ such that 
$$\begin{aligned}C_{16,0,0}=&b_1R_2(10,2,0,0,0)+b_2R_2(9,2,1,0,0)+b_3R_2(8,3,1,0,0)+b_4R_2(7,3,2,0,0)\\
+&b_5R_2(6,5,1,0,0)+b_6R_2(5,4,2,1,0)+b_7R_2(4,3,2,2,1)+b_8R_2(4,4,4,0,0).\end{aligned}$$
These coefficients are quite unruly, for example
\begin{align*}
& b_1=\frac{1790484010217545392288}{168520823757097513517}, b_2=\frac{1795809487559936088240}{168520823757097513517}, \ldots  \\
& b_8=-\frac{1464894501954686124462}{168520823757097513517}.
\end{align*}
Similar equations can be constructed for $C_{14,2,0}$,...,$C_{8,8,0}$, eliminating the need for these generators as well. Finally, since $\partial C_{15,0,0}=C_{16,0,0}+2C_{15,1,0}$, $C_{15,1,0}$ is not needed either. Together, all weight 22 cubic generators can be written in terms of lower weight fields. 

Conformal weight 13 is the lowest weight at which we can remove all of the cubic generators, namely the fields $C_{7,0,0}$, $C_{6,1,0}$, $C_{5,2,0}$, and $C_{4,3,0}$. The field $C_{6,1,0}$ can be removed only using the translation operator as described above, so we focus on the remaining three. In order to eliminate these weight 13 cubic generators we consider the following system of three equations
$$\begin{aligned}3C_{7,0,0}+42C_{4,3,0}-28\partial C_{3,3,0}=&14\partial C_{6,0,0}-252\partial^2C_{3,2,0}+42\partial^3C_{2,2,0}-77\partial^4C_{3,0,0}\\&+84\partial^5C_{2,0,0}-6\partial^7 C_{0,0,0},\\27C_{5,2,0}+81C_{4,3,0}-24\partial C_{3,3,0}=&12\partial C_{6,0,0}-351\partial^2C_{3,2,0}+81\partial^3C_{2,2,0}-96\partial^4C_{3,0,0}\\&+108\partial^5C_{2,0,0}-8\partial^7 C_{0,0,0}\\30C_{4,3,0}+12 C_{5,2,0}+13C_{7,0,0}=&R_2(2,1,0,0,0).\end{aligned}$$
Next, one can check that the determinant of the left-hand side is nonzero and thus 
we can write $C_{7,0,0}$, $C_{5,2,0}$, and $C_{4,3,0}$ in terms of lower weight generators. \color{black}

\color{black}

The lowest weight ``quantum correction'' relation occurs at weight 12 where, prior to this final reduction, the necessary generators are $C_{6,0,0}$, $C_{4,2,0}$, and $C_{3,3,0}$ -- $C_{5,1,0}$ can immediately be removed by our previous discussion. Next, we have
$$30C_{4,2,0}=-79C_{6,0,0}+R_2(1,1,0,0,0),$$
leaving only the for $C_{6,0,0}$ and $C_{3,3,0}$, which may  not be removed. Interestingly, there is a nontrivial relation involving cubic generators at weight 12 but it only involves terms that have been differentiated. Namely, we have
$$9\partial^2C_{4,0,0}-18\partial^2C_{2,2,0}-24\partial^3C_{3,0,0}+18\partial^4C_{2,0,0}-\partial^6C_{0,0,0}=0.$$
Next, there are no ``quantum correction'' relations at weight 11 but we do have a nontrivial relation involving the translation operator,
$$30C_{3,2,0}=3C_{5,0,0}-15\partial^2C_{3,0,0}+15\partial^3C_{2,0,0}-\partial^5C_{0,0,0},$$
meaning the only weight 11 generator needed is $C_{5,0,0}$. Similarly, at weight 10, we only need $C_{4,0,0}$ because
$$18C_{2,2,0}=9C_{4,0,0}-24\partial C_{3,0,0}+18\partial^2 C_{2,0,0}-\partial^4C_{0,0,0}.$$
After all of this, and other similar calculations, the only cubic generators that are required in $\mathcal{G}$ are $C_{0,0,0}$, $C_{m,0,0}$ for $2\leq m\leq 6$, and $C_{3,3,0}$.

\subsection{Quadratic generators} Now we move to the quadratic terms $W_{2m,0}$ for $m \ge 0$. We first establish some basic facts about $W_{a,b}$.
Using the fact that 
\begin{equation}
\partial W_{a,b} = W_{a+1,b} + W_{a,b+1}
\end{equation}
we have using Binomial Theorem
\begin{equation}\label{gen-der}
\partial^k W_{a,0} = \sum_{j=0}^k {k \choose j} W_{a+k-j,j}
\end{equation}
and we rewrite $W_{a,b}$ as
\begin{equation}\label{der-rewrite}
W_{a,b}  = \sum_{i=0}^b(-1)^{b-i} {b \choose i} \partial^i W_{a+b-i,0}
\end{equation}
We need the following lemmas:
\begin{lem}
For $p_1,p_2,m_1,m_2 \ge 0$ we have
\begin{equation}\label{0-prod}
(W_{p_1,p_2})_{(0)} W_{m_1,m_2} = \frac{(-1)^{p_1} + (-1)^{p_2}}{3!}W_{p_1+p_2+m_1+3,m_2} + \frac{(-1)^{p_1} + (-1)^{p_2}}{3!}W_{p_1+p_2+m_2+3,m_1} 
\end{equation}
and
\begin{align}\label{1-prod}
(W_{p_1,p_2})_{(1)} W_{m_1,m_2} &= \frac{(-1)^{p_1}(p_1+m_1+3) + (-1)^{p_2}(p_2+m_1+3)}{3!}W_{p_1+p_2+m_1+2,m_2} \\
&+ \frac{(-1)^{p_1}(p_1+m_2+3) + (-1)^{p_2}(p_2+m_2+3)}{3!}W_{p_1+p_2+m_2+2,m_1} 
\end{align}
\end{lem}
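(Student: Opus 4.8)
The plan is to compute both products by direct Wick-style contraction, using the associativity identities \eqref{nop2} and \eqref{nop3} as the rigorous engine and the elementary contraction formula $(\partial^m T_1)_{(k)}(\partial^n T_2)=\frac{(-1)^m(m+n+3)!}{3!}\,\delta_{k,m+n+3}\1$ (together with its $T_1\leftrightarrow T_2$ analogue) to evaluate every single pairing. First I would record a structural simplification: since $\mathrm{wt}(W_{a,b})=a+b+4$, the product $(W_{p_1,p_2})_{(n)}W_{m_1,m_2}$ has weight $p_1+p_2+m_1+m_2+7-n$, which is strictly positive for $n\in\{0,1\}$, so no multiple of $\1$ can occur. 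Hence for these two values of $n$ only single contractions survive and the output is again quadratic in the $T_i$, i.e. a linear combination of fields $\nop{(\partial^\alpha T_1)(\partial^\beta T_2)}$. This is exactly why the right-hand sides of \eqref{0-prod} and \eqref{1-prod} are purely of $W$-type.

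Next I would expand. Writing $W_{p_1,p_2}=\nop{(\partial^{p_1}T_1)(\partial^{p_2}T_2)}+\nop{(\partial^{p_2}T_1)(\partial^{p_1}T_2)}$, I apply \eqref{nop3} to each summand, taking $a$ to be the $T_1$-factor, $b$ the $T_2$-factor, and $c=W_{m_1,m_2}$. This produces two channels. In the first sum $\sum_k \frac{1}{k!}\nop{(\partial^k a)(b_{(n+k)}c)}$ the factor $b$ contracts against the two $T_1$-factors of $c$; evaluating $b_{(n+k)}c$ by \eqref{nop2} collapses it to a single field, since the like-type products $T_2T_2$ vanish, the term $\nop{(b_{(n+k)}(\partial^\bullet T_1))(\partial^\bullet T_2)}$ is a scalar multiple of the surviving factor, and the corrections $(b_{(\cdot)}(\partial^\bullet T_1))_{(k-1)}(\partial^\bullet T_2)$ vanish for $k\ge1$. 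This channel yields terms $\nop{(\partial^{\bullet}T_1)(\partial^{\bullet}T_2)}$ in which the extra derivatives sit on $T_1$. In the second sum $\sum_k b_{(n-k-1)}(a_{(k)}c)$ the factor $a$ contracts against the two $T_2$-factors of $c$, and then $b$ re-enters through the \emph{negative} (creation) modes $b_{(n-k-1)}$ with $n-k-1<0$, which via $b_{(-j-1)}d=\frac{1}{j!}\nop{(\partial^{j}b)d}$ produce terms $\nop{(\partial^{\bullet}T_2)(\partial^{\bullet}T_1)}$ in which the extra derivatives sit on $T_2$. Recognizing these creation-mode contributions is the step most easily overlooked, and it is precisely the one supplying the ``reversed'' orderings.

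I would then reassemble. The normal ordered product here is commutative: for any two of our fields $X,Y$ one has $X_{(j)}Y\in\CC\1$ for $j\ge0$ because the OPE numerators in \eqref{OPE} are constants, so the quasi-commutativity identity gives $\nop{XY}-\nop{YX}=\sum_{j\ge0}\frac{(-1)^j}{(j+1)!}\partial^{j+1}(X_{(j)}Y)=0$ since $\partial\1=0$. Using this to put every term into the standard $T_1$-then-$T_2$ order, the $T_1$-heavy terms from the first channel and the $T_2$-heavy terms from the second channel pair up exactly into the two summands of each $W_{p_1+p_2+m_i+3-n,\,m_j}$. Summing the contributions of both normal ordered products making up $W_{p_1,p_2}$, the coefficient of each such $W$ becomes $\frac{(-1)^{p_1}+(-1)^{p_2}}{3!}$ for $n=0$, giving \eqref{0-prod}; for $n=1$ the surviving factorial ratio $\frac{(p+m+3)!}{(p+m+2)!}=p+m+3$ attaches the weights $p_i+m_j+3$ in \eqref{1-prod}.

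The main obstacle is bookkeeping rather than conceptual: one must correctly track which sign $(-1)^{p_1}$ versus $(-1)^{p_2}$ is generated by each of the two channels for each of the two normal ordered products, handle the factorial ratios $\frac{(p+m+3)!}{3!\,(p+m+3-n)!}$ uniformly in $n$, and, above all, not discard the creation-mode half of \eqref{nop3}, which is responsible for exactly half of every $W$ on the right-hand side.
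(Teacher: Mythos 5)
Your proposal is correct and matches the paper's proof, which simply states that the lemma "follows immediately by direct computation with (\ref{nop1})--(\ref{nop3})"; you have carried out exactly that computation, correctly tracking both the annihilation-mode and creation-mode channels of (\ref{nop3}) and the vanishing of like-type contractions. The only difference is that you supply the bookkeeping the paper omits.
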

\begin{proof}
This follows immediately by direct computation with (\ref{nop1}) - (\ref{nop3}).
\end{proof}
Our main tool will be the operator $(W_{0,0})_{(1)}$. In particular, by direct computation using (\ref{1-prod}) we have that 
\begin{equation}\label{old-00-1-action}
(W_{0,0})_{(1)}W_{a,0} = \frac{a+3}{3}W_{a+2,0} +W_{2,a} = \frac{a+3}{3}W_{a+2,0} +W_{a,2} 
\end{equation}
Now, using (\ref{der-rewrite}) we have
\begin{equation}
W_{a,2} = W_{a+2,0} - 2\partial W_{a+1,0} + \partial^2 W_{a,0}
\end{equation}
and so we rewrite  (\ref{old-00-1-action}) as:
\begin{align}\label{00-1-action}
(W_{0,0})_{(1)}W_{a,0} &= \frac{a+3}{3}W_{a+2,0} + W_{a+2,0} - 2\partial W_{a+1,0} + \partial^2 W_{a,0}\nonumber \\
&=  \frac{a+6}{3}W_{a+2,0}  - 2\partial W_{a+1,0} + \partial^2 W_{a,0}
\end{align}

\begin{lem}\label{1-prod-k}
For $a\ge 0$ and $k \ge 2$ we have
\begin{equation}\label{1-prod-on-k-der}
(W_{0,0})_{(1)}(\partial^k W_{a,0}) = \sum_{j=0}^k\sum_{i=2}^j \frac{1}{3}{k \choose j}(-1)^{j-i}\left( (a+k-j+3){j \choose i} + (j+3) {j+2 \choose i}\right)\partial^i W_{a+k+2-i,0}
\end{equation}
\end{lem}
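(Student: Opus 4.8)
The plan is to compute the left-hand side directly, by expanding $\partial^k W_{a,0}$, applying $(W_{0,0})_{(1)}$ termwise, and then re-expressing everything in the canonical basis $\{\partial^i W_{c,0}\}$. First I would invoke (\ref{gen-der}) to write $\partial^k W_{a,0}=\sum_{j=0}^k \binom{k}{j} W_{a+k-j,j}$, which reduces the computation to evaluating $(W_{0,0})_{(1)} W_{a+k-j,j}$ for each $j$. For this I would specialize the general product (\ref{1-prod}) to $p_1=p_2=0$; since $(-1)^0+(-1)^0=2$, it collapses to the clean rule $(W_{0,0})_{(1)} W_{c,d}=\tfrac{c+3}{3}W_{c+2,d}+\tfrac{d+3}{3}W_{d+2,c}$. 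Applying it with $c=a+k-j$ and $d=j$ yields $\tfrac{a+k-j+3}{3} W_{a+k-j+2,j}+\tfrac{j+3}{3} W_{j+2,a+k-j}$ for each $j$.

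Next I would return both summands to the canonical form $\partial^i W_{a+k+2-i,0}$ using (\ref{der-rewrite}). On the first summand this is immediate and produces the $\binom{j}{i}$ terms (with $0\le i\le j$). On the second summand I would first apply the symmetry $W_{m,n}=W_{n,m}$, which is manifest from the definition of $W_{m,n}$, to rewrite $W_{j+2,a+k-j}=W_{a+k-j,j+2}$; applying (\ref{der-rewrite}) with exponent $j+2$ then produces the $\binom{j+2}{i}$ terms (with $i$ now running up through $j+2$). Routing through the symmetry is what makes both contributions land on the \emph{same} base vector $\partial^i W_{a+k+2-i,0}$ and carry the \emph{same} sign $(-1)^{j-i}$ (since $(-1)^{j+2-i}=(-1)^{j-i}$), so that they combine index-for-index into the single summand $\tfrac13\binom{k}{j}(-1)^{j-i}\big((a+k-j+3)\binom{j}{i}+(j+3)\binom{j+2}{i}\big)\partial^i W_{a+k+2-i,0}$.

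It then remains to see that the $i=0$ and $i=1$ terms drop out, which is what lets the inner index begin at $2$. Fixing $i$ and summing over $j$, the coefficient is $\tfrac13\sum_{j}\binom{k}{j}(-1)^{j-i}\,p_i(j)$, where a short expansion shows $p_0(j)=a+k+6$ is constant and $p_1(j)=(a+k+8)j+6$ is linear in $j$. Both alternating sums therefore vanish by the elementary identities $\sum_{j=0}^k\binom{k}{j}(-1)^j=0$ and $\sum_{j=0}^k\binom{k}{j}(-1)^j j=0$, the second of which needs $k\ge 2$; this is precisely where the hypothesis $k\ge 2$ enters. Collecting the surviving terms gives the asserted double sum.

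I expect the main obstacle to be the bookkeeping of this middle step rather than any conceptual difficulty. The key subtlety is that one must invoke the symmetry $W_{j+2,a+k-j}=W_{a+k-j,j+2}$ \emph{before} applying (\ref{der-rewrite}): rewriting the term in its original order would instead yield $\binom{a+k-j}{i}$ with the inner sum running to $a+k-j$, an algebraically equal but structurally different expression that does not assemble into the stated closed form (nor does it align the signs and base vectors across the two contributions). The remaining work — verifying the degree-$\le 1$ identities for $p_0$ and $p_1$ and tracking the signs and binomial ranges through the reindexing — is routine but should be carried out carefully.
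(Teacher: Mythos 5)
Your proposal is correct and follows essentially the same route as the paper's proof: expand $\partial^k W_{a,0}$ via (\ref{gen-der}), apply the specialization of (\ref{1-prod}) termwise, convert back to the canonical basis with (\ref{der-rewrite}), and kill the $i=0,1$ coefficients using the vanishing of alternating binomial sums against polynomials of low degree (which is exactly where $k\ge 2$ enters, as you note). Your observation that the second contribution should be symmetrized to $W_{a+k-j,j+2}$ before applying (\ref{der-rewrite}) is precisely what the paper does implicitly, and your remark that the resulting inner sum actually extends to $i=j+2$ is a fair (harmless) refinement of the stated upper limit.
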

\begin{proof}
We have, by application of (\ref{gen-der}) and then (\ref{1-prod}), 
\begin{align*}
(W_{0,0})_{(1)}(\partial^k W_{a,0}) &= (W_{0,0})_{(1)}\left(\sum_{j=0}^k {k \choose j} W_{a+k-j,j}\right)\\
&= \sum_{j=0}{k \choose j}\frac{1}{3} (a+k-j+3)W_{a+k-j+2,j} + \sum_{j=0}^k {k \choose j} \frac{1}{3} (j+3) W_{a+k-j,j+2}.
\end{align*}
Now, using (\ref{der-rewrite}) we have
\begin{align*}
(W_{0,0})_{(1)}(\partial^k W_{a,0}) &=\sum_{j=0}^k {k \choose j} \frac{1}{3} (a+k-j+3)\sum_{i=0}^j{j \choose i} (-1)^{j-i}\partial^i W_{a+k+2-i,0} \\
&+ \sum_{j=o}^k {k \choose j} \frac{1}{3} (j+3) \sum_{i=0}^{j+2} {j+2 \choose i}(-1)^{j+2-i}\partial^i W_{a+k+2-i,0}\\
&=  \sum_{j=0}^k\sum_{i=0}^j \frac{1}{3}{k \choose j}(-1)^{j-i}\left( (a+k-j+3){j \choose i} + (j+3) {j+2 \choose i}\right)\partial^i W_{a+k+2-i,0}
\end{align*}
We note that when $i=0$ we have 
\begin{equation}
\sum_{j=0}^k \frac{1}{3}{k \choose j} (-1)^j (a+k+6) W_{a+k+2,0} = 0
\end{equation}
since 
\begin{equation}
\sum_{j=0}^k {k \choose j} (-1)^j = 0.
\end{equation}
Moreover, when $i=1$ we have
\begin{align*}
\sum_{j=0}^k &\frac{1}{3}{k \choose j} (-1)^{j-1}\left((a+k-j+3)j + (j+3)(j+2)\right)\partial W_{a+k+1,0} \\
&= \sum_{j=0}^k \frac{1}{3}{k \choose j} (-1)^{j-1}\left((a+k+8)j+6\right)\partial W_{a+k+1,0}  .
\end{align*}
We note that 
\begin{equation}
\sum_{j=0}^k (-1)^j {k \choose j}((a+k+8)j+6) = 0
\end{equation}
(here we use the fact that $\sum_{j=0}^k (-1)^j {k \choose j} P(j) = 0$ when $P(j)$ is a polynomial of degree $\le k$). Thus our claim is proved.
\end{proof}
We have thus shown that when $(W_{0,0})_{(1)}$ acts on a second or higher derivative of $W_{a,0}$ it introduces no $0$-th and $1$-st derivatives of terms of the form $W_{n,0}$.
\begin{lem}\label{W001-product-rule}
\begin{equation}
(W_{0,0})_{(1)} \nop{W_{a,b} W_{c,d}} = \nop{ ((W_{0,0})_{(1)}W_{a,b})W_{c,d}} + \nop{ W_{a,b}((W_{0,0})_{(1)}W_{c,d})} 
\end{equation}
\end{lem}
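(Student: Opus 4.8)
The plan is to read the claimed Leibniz rule as a special case of the non-commutative Wick formula (\ref{nop2}) and then to check that the single correction term it generates drops out. Applying (\ref{nop2}) with first entry $W_{0,0}$, with $n=1$, and with the two factors of the normal ordered product taken to be $W_{a,b}$ and $W_{c,d}$, the correction sum $\sum_{k=1}^{1}\binom{1}{k}\bigl((W_{0,0})_{(1-k)}W_{a,b}\bigr)_{(k-1)}W_{c,d}$ collapses to its lone $k=1$ term, so that
\[
(W_{0,0})_{(1)} \nop{W_{a,b} W_{c,d}} = \nop{\bigl((W_{0,0})_{(1)} W_{a,b}\bigr) W_{c,d}} + \nop{ W_{a,b}\bigl((W_{0,0})_{(1)} W_{c,d}\bigr)} + \bigl((W_{0,0})_{(0)} W_{a,b}\bigr)_{(0)} W_{c,d}.
\]
Hence the lemma is equivalent to the assertion that the correction term $\bigl((W_{0,0})_{(0)} W_{a,b}\bigr)_{(0)} W_{c,d}$ vanishes, and this is the one point where the specific structure of the $W$-generators must be used.

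To dispatch the correction term I would evaluate it by two successive applications of (\ref{0-prod}). First, setting $p_1=p_2=0$ in (\ref{0-prod}) gives $(W_{0,0})_{(0)} W_{a,b} = \tfrac13\bigl(W_{a+3,b}+W_{b+3,a}\bigr)$. Applying $(\,\cdot\,)_{(0)}W_{c,d}$ to each of the two resulting generators, again via (\ref{0-prod}), produces in both cases the same pair of outputs $W_{a+b+c+6,d}$ and $W_{a+b+d+6,c}$. The decisive observation is that the prefactor in (\ref{0-prod}) is $\tfrac{(-1)^{p_1}+(-1)^{p_2}}{3!}$, so shifting the first index by $3$ reverses its sign: the contribution of $W_{a+3,b}$ carries the scalar $(-1)^b-(-1)^a$ while that of $W_{b+3,a}$ carries $(-1)^a-(-1)^b$. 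The two are exact negatives, so the correction term cancels term by term and vanishes identically in $a,b,c,d$.

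The proof is therefore short, and I expect the only genuine obstacle to be the recognition that this cancellation is forced by the index-shift sign rule rather than being a numerical accident. The required bookkeeping is only to confirm that both applications of (\ref{0-prod}) really yield the coincident generators $W_{a+b+c+6,d}$ and $W_{a+b+d+6,c}$, after which the Wick formula reduces to exactly the two Leibniz terms and the lemma follows, with no recourse to (\ref{nop1}) or (\ref{nop3}).
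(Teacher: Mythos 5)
Your proposal is correct and follows the paper's own argument essentially verbatim: apply (\ref{nop2}) with $n=1$ to isolate the single correction term $\bigl((W_{0,0})_{(0)}W_{a,b}\bigr)_{(0)}W_{c,d}$, then kill it by two applications of (\ref{0-prod}), using the sign flip $(-1)^{p+3}=-(-1)^p$ (and the symmetry $W_{b+3,a}=W_{a,b+3}$) to see that the two contributions cancel. No gaps.
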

\begin{proof}
We use (\ref{nop2}) to obtain 
\begin{equation*}
(W_{0,0})_{(1)} \nop{W_{a,b} W_{c,d}} = \nop{ ((W_{0,0})_{(1)}W_{a,b})W_{c,d}} + \nop{ W_{a,b}((W_{0,0})_{(1)}W_{c,d})}  + ((W_{0,0})_{(0)}W_{a,b})_{(0)}W_{c,d}
\end{equation*}
Using (\ref{0-prod}) twice we have that
\begin{align*}
((W_{0,0})_{(0)}W_{a,b})_{(0)}W_{c,d} &= \frac{1}{3}(W_{a+3,b} + W_{a,b+3})_{(0)}W_{c,d}\\
&= \frac{1}{3}(W_{a+3,b})_{(0)}W_{c,d} + \frac{1}{3}(W_{a,b+3})_{(0)}W_{c,d}\\
&= \frac{1}{3}\left(\frac{(-1)^{a+3} + (-1)^b}{3!}W_{a+3+b+c+3,d} + \frac{(-1)^{a+3} + (-1)^b}{3!}W_{a+3+b+d+3,c} \right)\\
&+\frac{1}{3}\left( \frac{(-1)^{a} + (-1)^{b+3}}{3!}W_{a+b+3+c+3,d} +  \frac{(-1)^{a} + (-1)^{b+3}}{3!}W_{a+b+3+d+3,c} \right) \\
&= 0
\end{align*}
thus proving our claim.
\end{proof}

 First, we note that the generators $W_{0,0},W_{2,0},\dots, W_{8,0}$ cannot be removed as there are no relations which allow us to rewrite them as normally ordered polynomials of our remaining cubic generators and normally ordered polynomials of lower weight terms.  \color{black} The following terms, however, can be removed:
\begin{align*}
W_{10,0} &= \frac{17360}{6777}\nop{C_{0,0,0} \partial C_{1,0,0}}-\frac{2240}{20331} \nop{\partial^2C_{0,0,0}C_{0,0,0}}-\frac{560}{251}  \nop{C_{0,0,0}C_{2,0,0}}-\frac{1120}{251} \nop{C_{1,0,0}C_{1,0,0}}\\
&+\frac{10360 }{2259}\nop{\partial W_{0,0},\partial^2W_{3,0}}+\frac{86744668 }{101655}\nop{\partial^2 W_{1,0},\partial W_{2,0}}-\frac{50002820}{20331} \nop{\partial^2W_{0,0}\partial W_{3,0}}\\
&+\frac{3920 }{2259}\nop{W_{0,0}W_{0,0}\partial W_{1,0}}-\frac{350}{251} \nop{W_{0,0}\partial W_{5,0}}+\frac{458080 }{6777}\nop{\partial W_{0,0}W_{0,0},W_{1,0}}\\
&+\frac{81200 }{6777}\nop{\partial W_{0,0}\partial W_{0,0}W_{0,0}}+\frac{336}{251} \nop{\partial W_{0,0}W_{5,0}}+\frac{173270936 }{101655}\nop{\partial W_{2,0}W_{3,0}}-\frac{86874868 }{33885}\nop{\partial W_{2,0}\partial W_{2,0}}\\
&-\frac{10992884 }{101655}\nop{W_{2,0}\partial W_{3,0}}+\frac{560 }{2259}\nop{\partial^2 W_{0,0}W_{0,0},W_{0,0}}+\frac{24975370 }{6777}\nop{\partial^2W_{0,0},\partial^2 W_{2,0}}\\
&+\frac{5307442}{33885} \nop{\partial^2W_{2,0},W_{2,0}}-\frac{12484430 }{20331}\nop{\partial^4W_{0,0}\partial^2W_{0,0}}-\frac{560}{251} \nop{W_{0,0}W_{0,0},W_{2,0}}\\
&+\frac{3290 }{2259}\nop{W_{0,0},W_{6,0}}+\frac{2240}{251} \nop{W_{2,0},W_{4,0}}+\frac{40600 }{6777}\nop{W_{3,0},W_{3,0}}-\frac{1256080 }{6777}\nop{W_{0,0}W_{1,0},W_{1,0}}\\
&-\frac{5380942 }{101655}\nop{\partial^3 W_{1,0},W_{2,0}}+\frac{34423 }{4518}\partial^2W_{8,0}
\end{align*}
\begin{align*}
W_{12,0} &= -\frac{149957732453760}{5282271130981} \nop{C_{0,0,0}C_{4,0,0}}+\frac{29449282789080 }{5282271130981}\nop{C_{0,0,0}\partial C_{3,0,0}}\\
&+\frac{248361731055900 }{5282271130981}\nop{C_{0,0,0}\partial^2 C_{2,0,0}}+\frac{83092353248520 }{5282271130981}\nop{C_{1,0,0}C_{3,0,0}}\\
&+\frac{110166066382680 }{5282271130981}\nop{C_{2,0,0}C_{2,0,0}}+\frac{25002706049441505 }{21129084523924}\nop{W_{0,0}W_{0,0}\partial^2 W_{2,0}}\\
&+\frac{144692176980100662 }{26411355654905}\nop{W_{0,0}W_{1,0}\partial W_{2,0}}+\frac{964865299915440 }{5282271130981}\nop{W_{0,0}W_{2,0}W_{2,0}}\\
&+\frac{96307517079469080 }{5282271130981}\nop{W_{0,0}\partial^2 W_{1,0}W_{1,0}}+\frac{205714934779662 }{5282271130981}\nop{W_{2,0}\partial W_{5,0}}\\
&+\frac{30867843011111491}{132056778274525} \nop{W_{3,0}W_{5,0}}+\frac{905465924348909 \nop{W_{3,0}\partial W_{4,0}}}{52822711309810}\\
&+\frac{118788131274120 }{5282271130981}\nop{W_{4,0}W_{4,0}}+\frac{404810764267140 }{5282271130981}\nop{\partial C_{0,0,0} \partial^2 C_{1,0,0}}\\
&+\frac{317276622972540 }{5282271130981}\nop{\partial C_{1,0,0}C_{2,0,0}}+\frac{392721757761984981 }{26411355654905}\nop{\partial W_{0,0}W_{0,0}W_{3,0}}\\
&+\frac{25992477708120 }{406328548537}\nop{\partial W_{0,0}\partial W_{0,0}W_{2,0}}+\frac{385990604483438691 }{26411355654905}\nop{\partial W_{0,0}\partial W_{0,0} \partial W_{1,0}}\\
&+\frac{15048061294365 }{5282271130981}\nop{\partial W_{0,0}W_{7,0}}+\frac{1102902314088960 }{5282271130981}\nop{\partial W_{3,0}W_{4,0}}\\
&+\frac{3287216023258860 }{5282271130981}\nop{\partial^2 W_{0,0}W_{0,0}W_{2,0}}+\frac{240861688627154562 }{26411355654905}\nop{\partial^2 W_{0,0}W_{0,0}\partial W_{1,0}}\\
&+\frac{925716380630407641}{52822711309810} \nop{\partial^2 W_{0,0}\partial W_{0,0}\partial W_{0,0}}+\frac{205032882406275 }{10564542261962}\nop{\partial^2 W_{0,0}W_{6,0}}\\
&+\frac{21869614107495 }{406328548537}\nop{\partial^2 W_{2,0}\partial W_{3,0}}+\frac{203770339785809577 }{52822711309810}\nop{\partial^3 W_{0,0}\partial W_{0,0}W_{0,0}}\\
&-\frac{32823424473840}{406328548537} \nop{C_{1,0,0}\partial C_{2,0,0}}-\frac{4281461005800}{406328548537} \nop{W_{0,0}W_{8,0}}\\
&-\frac{47308239047685 }{5282271130981}\nop{C_{0,0,0}\partial^3 C_{1,0,0}}-\frac{149957732453760 }{5282271130981}\nop{W_{0,0}W_{0,0}W_{4,0}}\\
&-\frac{7718192159554800 }{5282271130981}\nop{W_{0,0}\partial W_{1,0}W_{2,0}}-\frac{92540076073830 }{5282271130981}\nop{W_{2,0}W_{6,0}}\\
&-\frac{157046463119700 }{5282271130981}\nop{\partial C_{1,0,0}\partial C_{1,0,0}}-\frac{113139651104865 }{5282271130981}\nop{\partial W_{0,0}\partial W_{6,0}}\\
&-\frac{1476111467740140 }{5282271130981}\nop{\partial W_{2,0}W_{5,0}}-\frac{718283710890900 }{5282271130981}\nop{\partial W_{2,0}\partial^2 W_{3,0}}\\
&-\frac{923178502476615 }{5282271130981}\nop{\partial W_{3,0}\partial W_{3,0}}-\frac{900339366562020 }{5282271130981}\nop{\partial^2 C_{1,0,0}C_{1,0,0}}\\
&-\frac{1252803312074160 }{5282271130981}\nop{\partial^2 W_{2,0}W_{4,0}}-\frac{919748819913000 }{5282271130981}\nop{\partial^2 W_{3,0}W_{3,0}}\\
&-\frac{26848347447845}{5282271130981} \nop{\partial^4 C_{0,0,0},C_{0,0,0}}-\frac{7944187809494475 }{10564542261962}\nop{W_{0,0}W_{0,0}\partial W_{3,0}}\\
&-\frac{845384942454129}{10564542261962} \nop{\partial^2 W_{0,0}\partial W_{5,0}}-\frac{319311041738535 }{21129084523924}\nop{\partial^2 W_{2,0}\partial^2 W_{2,0}}\\
&-\frac{47257775301300762 }{26411355654905}\nop{W_{0,0}W_{1,0}W_{3,0}}-\frac{228752216559403812 }{26411355654905}\nop{W_{0,0}\partial W_{1,0} \partial W_{1,0}}\\
&-\frac{625388315162031531}{26411355654905} \nop{\partial W_{0,0}W_{0,0}\partial W_{2,0}}-\frac{2624199202543291164 }{26411355654905}\nop{\partial W_{0,0}\partial W_{1,0}W_{1,0}}\\
&-\frac{62850174015927078 }{26411355654905}\nop{\partial^2 W_{0,0}\partial^2 W_{0,0}W_{0,0}}-\frac{260626674972175677 }{26411355654905}\nop{\partial^3 W_{0,0}W_{0,0}W_{1,0}}\\
&-\frac{8514499500637515 }{42258169047848}\nop{\partial^4 W_{0,0}W_{0,0}W_{0,0}},
\end{align*}
and $W_{14,0}$, for which we do not display an explicit formula for the sake of brevity.
We note that $W_{14,0}$ can be written purely in terms of generators of the form $W_{2m,0}$ for $0 \le m \le 6$. Importantly, we note that any $\partial^k W_{2m,0}$ term which is not part of a product is a second derivative or higher ($k \ge 2$). 

Suppose now that we have rewritten $W_{a,0}$, $a$ even, as a normally ordered polynomial of lower weight quadratic terms and their derivatives \color{black}, where any $\partial^k W_{2m,0}$ term which is not part of a product is a second derivative or higher ($k \ge 2$). We then have, using (\ref{00-1-action}), 
\begin{align*}
W_{a+2,0} = \frac{3}{a+6}\left( (W_{0,0})_{(1)}W_{a,0} +2\partial W_{a+1,0} - \partial^2 W_{a,0}\right)
\end{align*}
First we examine $(W_{0,0})_{(1)}W_{a,0} $. We note that using Lemma \ref{W001-product-rule}, any normally ordered product of two or more terms of the form $W_{k,0}$ and their derivatives will itself become a sum of such products. Using Lemma \ref{1-prod-k} we see that with application of $(W_{0,0})_{(1)}$ the only $\partial^k W_{2m,0}$ which $(W_{0,0})_{(1)}$ introduces that is not part of a product is a second derivative or higher and no first and $0$-th derivatives are introduced. Next, we note that $\partial W_{a+1,0}$ can be rewritten in terms of $\partial^2 W_{a,0}$ and a normally ordered polynomial of terms of lower weight and their derivatives. Lastly, we note that $\partial^2 W_{a,0}$ is of the correct form. Thus, we have eliminated all quadratic generators $W_{2m,0}$ for $m \ge 5$.

To summarize, cubic generators  $C_{0,0,0}$, $C_{m,0,0}$ for $2\leq m\leq 6$, $C_{3,3,0}$,  quadratic generators $W_{0,0},W_{2,0},\dots, W_{8,0}$, and $T_0$ form a strong generating set of $\mathcal{A}^{S_3}$.

Equipped with a strong generating set for $(\V_\infty^{\otimes 3})^{S_3}$ we also obtain a strong generating set for $(\V_c^{\otimes 3})^{S_3}$ for any generic value using Proposition \ref{linshaw}. 
To see that this is in fact a minimal generating set it is sufficient to compare the character ${\rm ch}[\mathcal{A}^{S_3}](q)$  \cite{MPS} and 
the "free" character coming from the obtained generators $$\frac{1}{(q^2;q)_\infty (q^4;q)_\infty (q^6;q)^2_\infty (q^8;q)^2_\infty (q^9;q)_\infty (q^{10};q)^2_\infty (q^{11};q)_\infty (q^{12};q)^3_\infty }.$$ These two $q$-series agree $O(q^{13})$ so no generator up to weight $12$ can be removed 
from the generating set so constructed a minimal set. This finishes the proof of Theorem \ref{main} in the introduction.

\begin{rem} We expect that for all generic values the generators of conformal weight $>2$ can be replaced with primary vectors.
\end{rem}

\color{black}
\section{The simple orbifold $c=\frac{1}{2}$}
In this section we consider the special case when the initial central charge is $\frac{1}{2}$, and thus the final central charge is $\frac{3}{2}$. This has a nice connection to the universal even spin  VOAs.
We now work inside $\V_{\frac12}^{\otimes 3}$.
We begin with the fields
\be\label{chalfsing}
v_i=\nop{L_iL_iL_i}+\frac{93}{64}\nop{(\partial L_i)(\partial L_i)}-\frac{33}{16}\nop{(\partial^2 L_i)L_i}-\frac{9}{128}\partial^4 L_i\ee
for $i=1,2,3$ which are each singular in $\mathcal{V}_{\frac{1}{2}}^{\otimes 3}$ -- as they are each singular in their appropriate copies of $\V_{\frac{1}{2}}$. From here we define an alternative to the standard generating set of $\mathcal{V}_{\frac{1}{2}}^{\otimes 3}$ which diagonalizes the action of $(1 2 3)\in S_3$
\be\begin{aligned}\label{z3gens}
L&=\frac{1}{\sqrt{3}}(L_1+L_2+L_3)\\
U_1&=\frac{1}{\sqrt{3}}(L_1+\eta L_2+\eta^2L_3)\\
U_2&=\frac{1}{\sqrt{3}}(L_1+\eta^2 L_2+\eta L_3),\end{aligned}\ee
where $\eta$ is a primitive third root of unity. Next, we set
\be\label{ws} W_{m+4}=\nop{(\partial^m U_1)U_2}+(-1)^m\nop{(\partial^m U_2)U_1}\ee
and 
\be\label{cs} C_{m+6}^{\pm}=\nop{(\partial^m U_1)U_1U_1}\pm\nop{(\partial^m U_2)U_2U_2}\ee
and by \cite{MPS} we know that $\left(\mathcal{V}_{\frac{1}{2}}^{\otimes 3}\right)^{\mathbb{Z}_3}$ is strongly generated by $W_{m_1}$,$C^{+}_{m_2}$, and $C^{-}_{m_3}$ for $m_1\in\{4,5,6,7,8,9,10\}$, $m_2\in\{6,8,9,10\}$, and $m_3\in\{6,8,9\}.$ Now we transport the singular vectors \eqref{chalfsing} into $\left(\mathcal{V}_{\frac{1}{2}}^{\otimes 3}\right)^{\mathbb{Z}_3}$ by defining
\be\begin{aligned}
S&=128\nop{LLL}+768\nop{LU_1U_2}+128\nop{U_1U_1U_1}+128\nop{U2U2U2}-264\sqrt{3}\nop{U_1(\partial^2 U_2)}\\&-264\sqrt{3}\nop{(\partial U_1)U_2}-264\sqrt{3}\nop{(\partial^2 L)L}+186\nop{(\partial L)(\partial L)}+372\sqrt{3}\nop{(\partial U_1)(\partial U_2)}+17\partial^4 L\\
S_1&=384\nop{LLU_1}+384\nop{LU_2U_2}+384\nop{U_1U_1U_2}-264\sqrt{3}\nop{L(\partial^2 U_1)}+372\sqrt{3}\nop{(\partial L)(\partial U_1)}\\&+186\sqrt{3}\nop{(\partial U_2)(\partial U_2)}-264\sqrt{3}\nop{(\partial^2 L)U_1}-264\sqrt{3}\nop{(\partial^2 U_2)U_2}+17\partial^4U_1\\
S_2&=384\nop{LLU_2}+384\nop{LU_1U_1}+384\nop{U_1U_2U_2}-264\sqrt{3}\nop{L(\partial^2 U_2)}+372\sqrt{3}\nop{(\partial L)(\partial U_2)}\\&+186\sqrt{3}\nop{(\partial U_1)(\partial U_1)}-264\sqrt{3}\nop{(\partial^2 L)U_2}-264\sqrt{3}\nop{(\partial^2 U_1)U_1}+17\partial^4U_2\end{aligned},\ee
and observe that $S,S_1,S_2$ are all weight 6 and transform in parallel to $L,U_1,U_2$ with respect to the $S_3$ action. From these parts we define the following $\mathbb{Z}_3$ invariant singular vectors 
\be\begin{aligned}\label{z3sings}
V_8^{\pm}&=\nop{U_1S_2}\pm\nop{U_2S_2}\\
V_9^{\pm}&=\nop{(\partial U_1)S_2}\pm\nop{(\partial U_2)S_2}\\
V_{10}^{\pm}&=\nop{(\partial^2 U_1)S_2}\pm\nop{(\partial^2 U_2)S_2}\\
Q_{10}^{\pm}&=\nop{U_1U_1S_1}\pm\nop{U_2U_2S_2}.\end{aligned}\ee
Now, we have everything ready to prove our first result of this section.
\begin{thm}\label{chalfz3}The simple orbifold $(\L_{\frac12}^{\otimes 3})^{\ZZ_3}$ is of type 2,4,5,6,6,7,8,9 and is strongly generated by $L$, together with $W_4,W_5,W_6,W_7,W_8,W_9$, and $C_6^-$.
\end{thm}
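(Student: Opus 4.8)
The plan is to begin from the strong generating set of $\left(\V_{\frac12}^{\otimes 3}\right)^{\ZZ_3}$ recalled from \cite{MPS}, namely $L$ together with $W_{4},\dots,W_{10}$, $C^{+}_{6},C^{+}_{8},C^{+}_{9},C^{+}_{10}$ and $C^{-}_{6},C^{-}_{8},C^{-}_{9}$, and to remove the seven ``extra'' generators $C_{6}^{+},C_{8}^{\pm},C_{9}^{\pm},C_{10}^{+},W_{10}$ upon passing to the simple quotient. The mechanism is that the weight-$6$ vectors $v_i$ of \eqref{chalfsing} are singular in $\V_{\frac12}$ and hence vanish in $\L_{\frac12}$; consequently $S,S_1,S_2$, their images in the $\ZZ_3$-diagonal basis, are zero in $(\L_{\frac12}^{\otimes 3})^{\ZZ_3}$, and so are the invariant combinations $V_8^{\pm},V_9^{\pm},V_{10}^{\pm},Q_{10}^{\pm}$ of \eqref{z3sings}, which lie in the ideal generated by the $S_i$. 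Each such vanishing, once expanded via \eqref{nop1}--\eqref{nop3} and rewritten in terms of the generators of $\left(\V_{\frac12}^{\otimes 3}\right)^{\ZZ_3}$, becomes a relation that I will solve for one of the unwanted generators.

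I would process these relations in order of increasing weight. Since $S$ transforms like $L$ it is already $\ZZ_3$-invariant, and the relation $S=0$ carries the term $128\big(\nop{U_1U_1U_1}+\nop{U_2U_2U_2}\big)=128\,C_6^{+}$ while every other contribution is expressible through $L$ and $W_4$ and their derivatives and normally ordered products; solving for $C_6^{+}$ removes it, and crucially leaves the antisymmetric $C_6^{-}$ untouched. Next, the weight-$8$ relations $V_8^{\pm}=0$ each contain $C_8^{\pm}$ respectively with, I expect, a nonzero coefficient, the remaining terms being products of $L,W_4,W_5,W_6,W_8$ and the already-eliminated $C_6^{\pm}$; this removes $C_8^{+}$ and $C_8^{-}$, and the weight-$9$ relations $V_9^{\pm}=0$ remove $C_9^{\pm}$ in the same fashion. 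At weight $10$ both targets $C_{10}^{+}$ and $W_{10}$ are symmetric under $U_1\leftrightarrow U_2$, so I would use the two symmetric relations $V_{10}^{+}=0$ and $Q_{10}^{+}=0$ (the antisymmetric $V_{10}^{-},Q_{10}^{-}$ involve the non-generator $C_{10}^{-}$ and are not needed), extract the coefficients of $C_{10}^{+}$ and $W_{10}$ after back-substituting all lower-weight eliminations, and solve the resulting $2\times 2$ linear system.

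The main obstacle is computational rather than conceptual. Each relation is the full expansion of a degree-$4$ or degree-$5$ null vector and therefore carries a large number of quantum-correction terms, as the generic-case formulas for $W_{10}$ and $W_{12}$ already illustrate, so the whole reduction must be executed with the OPE package \cite{T}. The genuinely delicate verifications are: (i) that the coefficient of the targeted cubic generator is nonzero in each single-generator relation $V_8^{\pm}=0$, $V_9^{\pm}=0$; and (ii) that the $2\times 2$ coefficient matrix of $(C_{10}^{+},W_{10})$ produced by $V_{10}^{+}=0$ and $Q_{10}^{+}=0$ is nonsingular — the extraction of $W_{10}$, being a degree-$2$ quantum correction buried inside higher-degree null vectors, is the most fragile point. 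Should a naive pairing degenerate, I would take suitable linear combinations of $V_{10}^{+}$ and $Q_{10}^{+}$ to restore full rank, and I would confirm that the seven eliminations admit a consistent ordering so that each target is written strictly in terms of retained or previously eliminated data, with no circularity.

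Finally, to upgrade strong generation to the asserted type $(2,4,5,6,6,7,8,9)$, I would compare the graded character of $(\L_{\frac12}^{\otimes 3})^{\ZZ_3}$ — computable from the known character of the Ising model $\L_{\frac12}$ together with the $\ZZ_3$-projection — against the ``free'' character built from one generator in each of the weights $2,4,5,6,6,7,8,9$. Agreement of the two $q$-series through the relevant order shows that none of these eight generators can be dropped, which establishes minimality and hence the stated type, completing the proof.
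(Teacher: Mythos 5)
Your proposal follows essentially the same route as the paper: the singular vectors $v_i$ vanish in $\L_{\frac12}$, hence $S$ and the invariant combinations $V_8^{\pm},V_9^{\pm},V_{10}^{\pm},Q_{10}^{\pm}$ vanish in the orbifold, and each resulting relation is solved (weight by weight, with the paper displaying the $C_6^{+}$ case explicitly) for one superfluous generator, exactly as you describe. Your explicit bookkeeping of which relation eliminates which generator, the $2\times 2$ system at weight $10$ for $(C_{10}^{+},W_{10})$, and the character comparison for minimality are all consistent with, and somewhat more detailed than, the paper's brief argument.
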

\begin{proof}
This result from fairly routine calculations involving the generators described in \eqref{z3gens} as well as the singular vectors \eqref{z3sings}. For example, the equation
$$\begin{aligned}128C_6^+&=S+450\sqrt{3}W_6-768\nop{L,W_4}-128\nop{LLL}-186\sqrt{3}\nop{(\partial L)(\partial L)}\\&+264\sqrt{3}\nop{(\partial^2 L)L}-186\sqrt{3}\partial^2 W_4+(-17+75\sqrt{3})\partial^4 L,\end{aligned}$$
eliminating the need for $C_6^+$ from the strong generating set. Similar equations exist to remove the remaining superfluous strong generators. 
\end{proof}

Now we move our attention to the orbifold  $(\L_{\frac12}^{\otimes 3})^{S_3}$, which we may view as the $\mathbb{Z}_2$ orbifold of  $(\L_{\frac12}^{\otimes 3})^{\ZZ_3}$ where the additional nontrivial action is given by $U_1 \leftrightarrow U_2$. Due to \eqref{ws} and \eqref{cs} it is clear that 
$$W_{2m+4},C^+_{m+6}\in (\L_{\frac12}^{\otimes 3})^{S_3}$$
for all $m\geq 0$.
In fact, of the generators described in Theorem \ref{chalfz3}, we have $L,W_4,W_6,W_8\in (\L_{\frac12}^{\otimes 3})^{S_3}$ while $C_6^-\mapsto -C_6^-$, $W_5\mapsto -W_5$, $W_7\mapsto -W_7$, and $W_9\mapsto -W_9$ under the additional $\mathbb{Z}_2$ action. In fact, we can check that the fields $L,W_4,W_6,W_8$ close under OPE and thus form a subalgebra of $(\L_{\frac12}^{\otimes 3})^{S_3}$. Direct computation shows that OPEs of odd terms are also in the subalgebra generated by $L,W_4,W_6,W_8$. From this, it is a routine calculation to construct the appropriate relations that remove the need for generators of the form $\nop{(\partial^a W_{2m+1})(\partial^b W_{2n+1})}$ and $\nop{(\partial^a W_{2m+1})(\partial^b C^-_{6})}$  from the strong generating set. 

\begin{thm} \label{ope-12} The orbifold algebra $\left(\L_{\frac12}^{\otimes 3}\right)^{S_3}$ is strongly generated by the fields $L,W_4,W_6,W_8$ and is thus of type $2,4,6,8$.
 \end{thm}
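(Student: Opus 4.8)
The plan is to realize $(\L_{\frac12}^{\otimes 3})^{S_3}$ as the $\mathbb{Z}_2$-fixed subalgebra of the $\ZZ_3$-orbifold $(\L_{\frac12}^{\otimes 3})^{\ZZ_3}$, whose strong generators are recorded in Theorem \ref{chalfz3}, and then to run the standard invariant-theory reduction for a $\mathbb{Z}_2$-action. First I would split the strong generators $L,W_4,W_5,W_6,W_7,W_8,W_9,C_6^-$ into eigenspaces of the additional involution $U_1\leftrightarrow U_2$: the even fields $L,W_4,W_6,W_8$ and the odd fields $W_5,W_7,W_9,C_6^-$. Since $(\L_{\frac12}^{\otimes 3})^{S_3}$ is precisely the $+1$-eigenspace of this involution inside $(\L_{\frac12}^{\otimes 3})^{\ZZ_3}$, the general principle governing $\mathbb{Z}_2$-orbifolds guarantees that it is strongly generated by the even fields $L,W_4,W_6,W_8$ together with all normally ordered products $\nop{(\partial^a X)(\partial^b Y)}$ of pairs $X,Y$ of odd fields (a product of an even number of odd fields reduces to pairwise products modulo lower-order terms). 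It therefore suffices to show that every such pairwise product already lies in the vertex subalgebra $\langle L,W_4,W_6,W_8\rangle$.

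Because $\langle L,W_4,W_6,W_8\rangle$ closes under OPE (a finite check, as noted just before the statement), the inclusion $\langle L,W_4,W_6,W_8\rangle\subseteq(\L_{\frac12}^{\otimes 3})^{S_3}$ is immediate, and the real content is the reverse inclusion. For this I would first collapse the infinite family of pairwise products to finitely many base cases using the translation-operator identity $\partial\nop{ab}=\nop{(\partial a)b}+\nop{a(\partial b)}$ together with the rearrangement identities (\ref{nop1})--(\ref{nop3}), which rewrite any $\nop{(\partial^a X)(\partial^b Y)}$ as a combination of low-derivative products and total derivatives of lower-weight invariants. I would then verify directly that each base product---$\nop{W_5 W_5}$, $\nop{W_5 C_6^-}$, $\nop{C_6^- C_6^-}$, $\nop{W_5 W_7}$, and the finitely many others at small weight---reduces to a normally ordered polynomial in $L,W_4,W_6,W_8$ and their derivatives. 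The decisive input here, absent in the generic situation of Theorem \ref{main}, is the vanishing of the weight-$6$ singular vectors $v_i$ of $\L_{\frac12}$ from \eqref{chalfsing}: transported into the orbifold as $S,S_1,S_2$ and assembled into the $\ZZ_3$-invariant null vectors $V_8^\pm,V_9^\pm,V_{10}^\pm,Q_{10}^\pm$ of \eqref{z3sings}, these supply the extra relations that let the odd--odd products collapse into $\langle L,W_4,W_6,W_8\rangle$.

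The step I expect to be the main obstacle is exactly this final reduction: I must check that the odd--odd products reduce without any new even generator being forced to appear at some intermediate weight, and in particular that the products involving $C_6^-$---including $\nop{(\partial^a C_6^-)(\partial^b C_6^-)}$---are absorbed rather than producing a genuinely new field. This requires deploying the null-vector relations in the correct linear combinations and verifying that the reduction terminates at exactly the four fields $L,W_4,W_6,W_8$, which is the delicate computational heart of the argument. Finally, to promote strong generation to the type statement $(2,4,6,8)$, I would compare the graded character of $(\L_{\frac12}^{\otimes 3})^{S_3}$---computable from the known character of $\L_{\frac12}$ and the $S_3$-action as in \cite{MPS}---against the free character $\frac{1}{(q^2;q)_\infty(q^4;q)_\infty(q^6;q)_\infty(q^8;q)_\infty}$; matching the graded dimensions confirms that a generator is genuinely needed at each of the weights $2,4,6,8$ and at none below them, so that $L,W_4,W_6,W_8$ is a minimal strong generating set.
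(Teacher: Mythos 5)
Your proposal follows essentially the same route as the paper: realizing $(\L_{\frac12}^{\otimes 3})^{S_3}$ as the $\mathbb{Z}_2$-orbifold of $(\L_{\frac12}^{\otimes 3})^{\ZZ_3}$ under $U_1\leftrightarrow U_2$, splitting the generators of Theorem \ref{chalfz3} into even and odd eigenspaces, and then showing by direct OPE computation (using the null-vector relations) that the pairwise products of odd fields already lie in the subalgebra generated by $L,W_4,W_6,W_8$. The extra character-comparison step you add for minimality is not part of the paper's argument but is harmless, since the type statement already follows from strong generation.
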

 \begin{rem} Using OPE package \cite{T} we have also shown that $L,W_2,W_4,W_6$ generators in Theorem \ref{ope-12} can be replaces with another strong set of generators $L,\tilde{W}_4,\tilde{W}_6,\tilde{W}_8$,  where $\tilde{W}_i$ are primary vectors under $L$. Thus this orbifold algebra is a $W$-algebra. 
 \end{rem}

 Now, we move to classify this orbifold using the tools in \cite{KL} which requires we correct the weight 4 field to be primary which may be done by 
 $$\widehat{W}_4=\mu(W_4-\frac{44}{59}\nop{LL}-\frac{9}{118}\partial^2L).$$
 where $\mu$ is a parameter to be fixed later. In \cite{KL}, the universal two parameter algebra, $\mathcal{W}^{\text{ev}}(c,\lambda)$, of type $\mathcal{W}(2,4,6,\dots)$ was rigorously constructed. This algebra is strongly generated by infinitely many fields in weights $2,4,6,\dots$ and weakly generated by a primary weight 4 field which we denote by $W_{\infty}^4$. We consider this algebra with central charge $\frac{3}{2}$ to correspond with the central charge of our orbifold. A normalization can be chosen for this field so that
\be\label{theirprod}
\left(W_{\infty}^4\right)_{(3)}W_{\infty}^4=816\lambda W_{\infty}^4-\frac{1088}{21} \left(-\frac{2303 \lambda ^2}{4}-1\right)\nop{LL}+\frac{680}{147} \left(-\frac{2303 \lambda ^2}{4}-1\right)2\partial^2 L.\ee
We also calculate
\be\label{ourprod}(\widehat{W}_4)_{(3)}\widehat{W}_4=-\frac{231}{118}\mu\widehat{W}_4+\frac{18552}{3481}\mu^2\nop{LL}-\frac{6615}{13924}\mu^2\partial^2L.\ee
Equating \eqref{theirprod} and \eqref{ourprod} gives $\lambda=\frac{22}{2891}$ and $\mu=-\frac{1088}{343}$, which does not match any other known algebras of type $(2,4,6,..,N)$, for instance the $\mathbb{Z}_2$ orbifold of the $\mathfrak{sl}_2$ parafermion algebra $(N_{10}(\mathfrak{sl}_2))^{\mathbb{Z}_2}$  has a $\lambda$ value of $\lambda=\frac{11}{272}$. The simple affine $\mathcal{W}$-algebras of type $B$ and $C$ as well as the $\mathbb{Z}_2$ orbifold of the affine $\mathcal{W}$-algebras of type $D$ are also of type $(2,4,6,...,N)$ but all have different values for $\lambda$ when $k$ is chosen so that their central charge is $\frac{3}{2}$.\color{black}

\section{The orbifold for $c=-\frac{22}{5}$ and affine $W$-algebras of type $G_2$}

In this part we study the simple $S_3$-orbifold algebra of $\L_{-\frac{22}{5}}$. We also consider two affine $W$-algebras associated to the exceptional rank two Lie algebra of type $\frak g_2$. 
There are four (nontrivial) nilpotent orbits of $\frak g_2$: short, long, subregular, and regular \cite{Coll}. In this paper, we consider the regular and subregular and the corresponding affine $W$-algebras. For more about affine $W$-algebras see \cite{KW}; see also \cite{F} for more about $W$-algebras of rank $2$.
As usual $W^k(\frak g_2,f)$ will denote the universal affine $W$-algebra of type $G_2$ and level $k$ associated to the nilpotent element $f$. Of course, two affine algebras are 
isomorphic if nilpotent elements come from the same nilpotent orbit. The unique simple quotient of $W^k(\frak g_2,f)$ will be 
denoted by $W_k(\frak g_2,f)$. 

\subsection{$W$-algebra $W^k(\frak g_2,f_{sub})$}

We choose realization of $\frak g_2$ using $8 \times 8$ matrices as in say \cite{H}. We also choose a nilpotent element $f_{sub}$ as in loc.cit. For this element $f:=f_{sub}$, let $\{e,f,h \}$ denote the corresponding $\frak{sl}_2$ triple.  Then with respect to ${\rm ad}(x)$, where $x=\frac{h}{2}$,  we have decompositions 
$$\frak{g}=\sum_{-2 \leq i \leq 2} \frak{g}_{i}$$
$$\frak{g}^f=\frak g_{-2}^f \oplus \frak g_{-1}^f$$ 
where $\frak g_{-2}^f$ is one-dimensional (this gives a generator of conformal weight $3$ inside the $W$-algebra) and $g_{-1}^f$ is $3$-dimensional. According to Kac-Wakimoto \cite{KW}  work 
$W^k(\frak g_2,f_{sub})$ is of type $(2^3,2)$. We denote the 
conformal generator of weight two by $L(z)$, with $E(z)$ and $F(z)$ two primaries of weight two, and by $G(z)$ the primary generator of conformal weight $3$. Then using the OPE 
program \cite{T} it is not difficult to obtain relations among generators. As far as we know, these OPE relations first appeared in J. Fasquel's PhD thesis \cite{F}.
\begin{prop} \label{OPE.sub} The following OPEs hold (we omit OPEs among $L(z)$ and $E(z),F(z), G(z)$ as those are uniquely determined) :
{
\begin{align*}
&E(z)E(w)  \sim  (10+3k) \frac{(4+k)c}{2(z-w)^4}+\frac{1}{(z-w)^2}  \left(2 (4+k)(10+3k)L(w)-4(3+k)F(w)\right) \\  & +\frac{1}{(z-w)}(  (4+k)(10+3k) (\partial L)(w) -2(3+k) (\partial E)(w)) \\
&F(z)F(w)  \sim  - (10+3k) \frac{(4+k)c}{2(z-w)^4} +\frac{1}{(z-w)^2} \left(-2 (4+k)(10+3k)L(w)-4(3+k)E(w)\right) \\ & +\frac{1}{(z-w)}( - (4+k)(10+3k) (\partial L)(w) -2(3+k) (\partial E)(w)) \\
&E(z) F(w)  \sim \frac{4(3+k)}{(z-w)^2} F(w)+\frac{1}{(z-w)} \left(-2 G(w)+2(3+k) (\partial F)(w) \right) \\
& G(z)G(w)  \sim \frac{(2+k)(10+3k)(16+5k)(4+k)c}{(z-w)^6} - \frac{3(2+k)(4+k)(10+3k)(16+5k)L(w)}{(z-w)^4} \\ & -\frac{3(2+k)(4+k)(10+3k)(16+5k)}{2(z-w)^3} \partial L(w) \\
&+ \frac{1}{(z-w)^2} \biggl(-(8+3k) \nop{E(w)^2} +2 (4+k)^2(10+3k) \nop{L(w)^2}+(8+3k) \nop{ F(w)^2} \\ & -\frac{3(2+k)(4+k)(8+3k)(10+3k)}{4} (\partial^2 L)(w) \biggr) \\
& + \frac{1}{(z-w)} (-(8+3k) \nop{E(w) (\partial E)} + 2(4+k)^2(10+3k) \nop{ L(w) (\partial L)(w)}  \\ & +(8+3k) \nop{F(w) \partial F(w)}  -\frac{(2+k)(4+k)(4+3k)(10+3k)}{6} \partial^3 L(w) ) \\
& E(z)G(w) \sim \frac{2(2+k)(16+5k)}{(z-w)^3} F(w)+\frac{(2+k)(16+5k)}{2(z-w)^2} \partial F(w)  \\ & +\frac{1}{z-w} \left(2 \nop{ E(w)F(w)}- 2(4+k) \nop{ L(w) F(w)} + 2 \partial G(w) +\frac{(2+k)^2}{2} 
\partial^2 F(w) \right) \\
& F(z)G(w) \sim \frac{2(2+k)(16+5k)}{(z-w)^3} E(w)+\frac{(2+k)(16+5k)}{2(z-w)^2} \partial E(w) \\ &  +\frac{1}{z-w} \left(-  \nop{ E(w)^2} - 2(4+k) \nop{L(w) E(w)} -\nop{ F(w)^2} +\frac{(2+k)^2}{2} 
\partial^2 E(w) \right).
\end{align*} 
}
where the central charge is $c=-\frac{4(k+2)(6k+17)}{k+4}$.
\end{prop}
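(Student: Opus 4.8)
The plan is to realize $W^k(\mathfrak{g}_2, f_{sub})$ concretely via quantum Drinfeld--Sokolov (Hamiltonian) reduction and then compute all OPEs among an explicit set of strong generators. First I would fix the $\mathfrak{sl}_2$-triple $\{e,f,h\}$ with $f=f_{sub}$ and the good even grading $\mathfrak{g}=\bigoplus_{-2\le i\le 2}\mathfrak{g}_i$ determined by $x=h/2$, exactly as set up in the excerpt. Since $\mathfrak{g}^f=\mathfrak{g}^f_{-2}\oplus\mathfrak{g}^f_{-1}$ with $\dim\mathfrak{g}^f_{-1}=3$ and $\dim\mathfrak{g}^f_{-2}=1$, the Kac--Wakimoto structure theorem \cite{KW} guarantees that $W^k(\mathfrak{g}_2,f_{sub})$ is freely strongly generated by one field of conformal weight $2$ for each of the three elements of a basis of $\mathfrak{g}^f_{-1}$ (one of which we take to be the Virasoro field $L$, the other two being $E,F$) together with a single weight-$3$ field $G$ attached to the generator of $\mathfrak{g}^f_{-2}$. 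This fixes the shape of the answer --- poles of order $\le 4$ in the $EE$, $FF$, $EF$ OPEs and order $\le 6$ in $GG$ --- before any computation.

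Second, I would build the BRST complex $C=V^k(\mathfrak{g}_2)\otimes F_{ch}$, where $F_{ch}$ is the charged free-fermion (ghost) system attached to $\mathfrak{g}_{>0}$, equipped with the Kac--Roan--Wakimoto differential $d_{(0)}$, so that $W^k(\mathfrak{g}_2,f_{sub})=H^0(C,d_{(0)})$. For each basis vector $v\in\mathfrak{g}^f$ I would produce an explicit cocycle $J^{\{v\}}=J^v+(\text{ghost and quantum corrections})$, choosing the corrections so that $d_{(0)}J^{\{v\}}=0$; in each bounded conformal weight this is a finite linear-algebra problem. I would then fix normalizations so that $L$ is the Virasoro field of the claimed central charge while $E,F$ are $L$-primary of weight $2$ and $G$ is $L$-primary of weight $3$, consistent with the conformal-weight constraints and any residual symmetry of the reduction.

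Third, with explicit representatives in hand, the singular OPEs are computed inside $C$ using the OPE package \cite{T}; because the $J^{\{v\}}$ are genuine cocycles, their singular OPE coefficients are again cocycles and hence can be rewritten, via the standard reconstruction, as normally ordered polynomials in $L,E,F,G$ and their derivatives modulo $d_{(0)}$-exact terms. Equivalently one may bypass the ghosts and pin the same OPEs down by bootstrap: posit the general singular-part ansatz compatible with conformal weight, with structure constants rational in $k$, and impose all Jacobi identities (associativity of the OPEs); freeness of $W^k(\mathfrak{g}_2,f_{sub})$ on these generators then forces a unique solution. Either route outputs the ten displayed OPEs, and the central charge $c=-\frac{4(k+2)(6k+17)}{k+4}$ is confirmed independently by the Kac--Roan--Wakimoto formula for $c(\mathfrak{g}_2,f_{sub},k)$.

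I expect the main obstacle to be the $G(z)G(w)$ OPE. Constructing the weight-$3$ cocycle $G$ requires the full set of quantum corrections, and its self-OPE carries a sixth-order pole together with the numerous quadratic normal-ordered terms $\nop{E^2},\nop{F^2},\nop{L^2}$ and their derivatives; getting the $k$-dependent coefficients right, and verifying that all the non-manifestly-closed terms do reduce to the generators modulo exact terms, is where the bulk of the (computer-assisted) bookkeeping lives. The $EG$ and $FG$ OPEs, which feed $G$ back into products such as $\nop{EF}$ and $\nop{LF}$, then serve as the secondary check that the normalization chosen for $G$ is globally consistent.
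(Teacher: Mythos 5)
Your proposal matches the paper's (very terse) proof in substance: the paper likewise fixes an explicit matrix realization of $\mathfrak{g}_2$ and the subregular nilpotent, invokes the Kac--Wakimoto structure theory to get generators of type $(2^3,3)$, and then obtains all the displayed OPEs by computer computation with the OPE package \cite{T}, citing Fasquel's thesis \cite{F} for their first appearance. Your write-up simply makes explicit the BRST/cocycle bookkeeping that the paper leaves implicit, so this is essentially the same argument.
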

Next we specialize $k=-\frac{16}{5}$ in Proposition \ref{OPE.sub}. We consider the ideal $I= \langle G \rangle$ generated by the primary element of degree $3$. From the OPEs for the quadratic 
generators, inside $W:=W^{-\frac{16}{5}}(\frak g_2,f_{sub})/I$ we can define three conformal vectors:
\begin{align*}
L_1(z) & =-\frac{5}{12} E(z)-\frac{5 i}{4 \sqrt{3}} F(z)+\frac{1}{3} L(z), \\
L_2(z) & =-\frac{5}{12} E(z)+\frac{5 i}{4 \sqrt{3}} F(z)+\frac{1}{3} L(z), \\
L_3(z) &  =\frac{5}{6} E(z)+\frac{1}{3} L(z),
\end{align*}
which mutually commute in $W$ (i.e. $L_i(z) L_j(w) \sim 0$, $i \neq j$) and each has central charge $c=-\frac{22}{5}$. Therefore $W$ must be a quotient of $\mathcal{V}_{-22/5}^{\otimes^3}$.
It is easy to see now that the maximal ideal $I_{max} \subset  W^{-\frac{16}{5}}(\frak g_2,f_{sub})$ must contain singular vectors $v_{sing}^{(i)}$, $i=1,2,3$ of degree $4$ for each of the three copies $\mathcal{V}_{-22/5}$.
We conclude that $I_{max}=\langle G, v_{sing}^{(1)},v_{sing}^{(2)},v_{sing}^{(3)}, \rangle $ and therefore 
\begin{prop} \label{subreg} We have an isomorphism of simple vertex operator algebras
$$W_{-\frac{16}{5}} (\frak g_2, f_{sub}) \cong \mathcal{L}_{-22/5}^{\otimes^3}.$$
\end{prop}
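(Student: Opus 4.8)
The plan is to realize $W_{-\frac{16}{5}}(\frak g_2,f_{sub})$ as the simple quotient of the universal algebra $W^{-\frac{16}{5}}(\frak g_2,f_{sub})$ and to match it with the simple quotient of $\mathcal{V}_{-22/5}^{\otimes 3}$, using the three commuting conformal vectors $L_1,L_2,L_3$ exhibited above. First I would specialize Proposition~\ref{OPE.sub} at $k=-\frac{16}{5}$ and record the two numerical coincidences that drive everything: the central charge becomes $c=-\frac{4(k+2)(6k+17)}{k+4}=-\frac{66}{5}=3\cdot\left(-\frac{22}{5}\right)$, and the factor $16+5k$ vanishes. The latter is the crucial point. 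In the $G(z)G(w)$ OPE every pole of order $\geq 3$ carries $16+5k$ as a factor, so in particular the order-six term $(2+k)(10+3k)(16+5k)(4+k)c\,\mathbb{1}$ vanishes, and likewise the leading poles of $E(z)G(w)$ and $F(z)G(w)$ disappear. Consequently the modes of $G$ never force the vacuum into the ideal it generates, so $\langle G\rangle$ is a proper graded ideal. Since $W^{-\frac{16}{5}}(\frak g_2,f_{sub})$ is of CFT type and hence has a unique maximal proper ideal $I_{max}$, we obtain $\langle G\rangle\subseteq I_{max}$; thus $G$ is null in the simple quotient, and $W_{-\frac{16}{5}}(\frak g_2,f_{sub})$ is a quotient of $W:=W^{-\frac{16}{5}}(\frak g_2,f_{sub})/\langle G\rangle$.

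Next I would verify, by direct OPE computation in $W$ (setting $G=0$ throughout) using \cite{T}, that the three conformal vectors $L_1,L_2,L_3$ defined above are \emph{mutually commuting} Virasoro fields each of central charge $-\frac{22}{5}$; that is, $L_i(z)L_j(w)\sim 0$ for $i\neq j$ and $L_i(z)L_i(w)\sim \frac{-11/5}{(z-w)^4}+\frac{2L_i(w)}{(z-w)^2}+\frac{\partial L_i(w)}{z-w}$. This gives a vertex algebra homomorphism $\phi:\mathcal{V}_{-22/5}^{\otimes 3}\to W$ sending the $i$-th Virasoro generator to $L_i$. Because $L=L_1+L_2+L_3$ and $E,F$ are recovered as (invertible) linear combinations of the $L_i$, the image of $\phi$ contains $L,E,F$, which strongly generate $W$; hence $\phi$ is surjective.

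Finally I would assemble the quotient chain. We have surjections $\mathcal{V}_{-22/5}^{\otimes 3}\twoheadrightarrow W\twoheadrightarrow W_{-\frac{16}{5}}(\frak g_2,f_{sub})$, the last map being passage to the simple quotient. Thus $W_{-\frac{16}{5}}(\frak g_2,f_{sub})$ is a \emph{simple} quotient of $\mathcal{V}_{-22/5}^{\otimes 3}$. Since $\mathcal{V}_{-22/5}$ is the universal $(2,5)$ Virasoro algebra and $\mathcal{L}_{-22/5}$ is rational, $\mathcal{V}_{-22/5}^{\otimes 3}$ has a unique maximal proper ideal, generated by the three degree-four Virasoro singular vectors, with simple quotient the simple VOA $\mathcal{L}_{-22/5}^{\otimes 3}$. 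Uniqueness of the simple quotient forces $W_{-\frac{16}{5}}(\frak g_2,f_{sub})\cong \mathcal{L}_{-22/5}^{\otimes 3}$, and tracing back identifies $I_{max}=\langle G,\,v_{sing}^{(1)},v_{sing}^{(2)},v_{sing}^{(3)}\rangle$, where $v_{sing}^{(i)}$ lifts the degree-four singular vector of the $i$-th Virasoro copy.

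The main obstacle is twofold. The genuinely delicate conceptual point is justifying that $\langle G\rangle$ is proper rather than all of $W^{-\frac{16}{5}}(\frak g_2,f_{sub})$; this rests entirely on the vanishing $16+5k=0$ together with the CFT-type grading, and it is precisely what makes the collapse of the weight-three generator possible. The bulk of the labor is the direct OPE verification in the second step: the $G(z)G(w)$, $E(z)G(w)$, and $F(z)G(w)$ relations of Proposition~\ref{OPE.sub} are intricate, and checking mutual commutativity together with the central charge of the $L_i$ demands careful normal-ordered manipulations best handled with \cite{T}. Everything else is formal once these two steps are in place.
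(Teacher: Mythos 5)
Your proposal follows essentially the same route as the paper: specialize Proposition~\ref{OPE.sub} at $k=-\tfrac{16}{5}$, pass to the quotient by $\langle G\rangle$, exhibit the three mutually commuting Virasoro vectors $L_1,L_2,L_3$ of central charge $-\tfrac{22}{5}$ to get a surjection from $\mathcal{V}_{-22/5}^{\otimes 3}$, and conclude by uniqueness of the simple quotient, arriving at the same description $I_{max}=\langle G, v_{sing}^{(1)},v_{sing}^{(2)},v_{sing}^{(3)}\rangle$. Your added remarks on why $16+5k=0$ makes $\langle G\rangle$ proper and on surjectivity merely make explicit what the paper leaves implicit, so this is the same argument.
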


\subsection{$W$-algebra $W^k(\frak g_2,f_{prin})$} In this part we construct  the principal $W$-algebra of type $\frak g_2$.
The ${W}$-algebra ${W}^k(\frak g_2,f_{prin})$ is known to be of type $(2,6)$, where $\omega$ is just the conformal vector given in \cite{KW}. 
It is convenient to use the standard parametrization of the level $k=-h^\vee+\frac{p}{q}=-4+\frac{p}{q}$, so that the central charge is $c(k)=-\frac{2(12p-7q)(7p-4q)}{pq}$.
We assume here that 
$$\left(336 k^2+2301 k+3940\right) \left(588 k^2+3991 k+6752\right) \neq 0.$$
If $k$ one of the four roots of this polynomial (then $k$ is generic) one has to adjust the weight $6$ generator appropriately. We omit this computation here.

Using the standard approach, we have constructed this algebra leading to the following nonzero OPEs for the weight 6 generator, $W$, with itself.
The OPEs between $L(z)$ and $W(z)$ are clear, and we have 
$$W(z) W(w) \sim \sum_{n \geq 0}^{11} \frac{W_{(n)} W(w)}{(z-w)^{n+1}}.$$
Below we give explicit formulas for all the summands. As far as we know these formulas did not appear in the literature.

{
\begin{dmath*}
W_{(11)}W=-\frac{2p_0(k)}{81} (7 k+24) (12 k+41) \left(336 k^2+2301 k+3940\right) \left(588 k^2+3991 k+6752\right)\mathbb{1}
\end{dmath*}
\begin{dmath*}
W_{(9)}W=\frac{4p_0(k)}{27} (k+4) \left(336 k^2+2301 k+3940\right) \left(588 k^2+3991 k+6752\right)L
\end{dmath*}
\begin{dmath*}
W_{(8)}W=\frac{2p_0(k)}{27} (k+4) \left(336 k^2+2301 k+3940\right) \left(588 k^2+3991 k+6752\right)\partial L
\end{dmath*}
\begin{dmath*}
W_{(7)}W=\frac{p_1(k)}{108}\left(-62 (k+4)\nop{LL}+3 \left(84 k^2+579 k+1000\right)\partial^2L\right)
\end{dmath*}
\begin{dmath*}
W_{(6)}W=\frac{p_1(k)}{108}\left(-62 (k+4)\nop{(\partial L)L}+\frac{2}{3} \left(84 k^2+579 k+1000\right)\partial^3L\right)
\end{dmath*}
\begin{dmath*}
W_{(5)}W=\frac{280p_3(k)}{9}W+p_2(k)\Lambda^5(L)
\end{dmath*}
\begin{dmath*}
W_{(4)}W=\frac{140p_3(k)}{9}\partial W+p_2(k)\Lambda^4(L)
\end{dmath*}
\begin{dmath*}
W_{(3)}W=p_5(k)\left(\frac{10}{9} \left(588 k^2+3929 k+6504\right)\partial^2W-\frac{1240}{3} (k+4)\nop{LW}\right)+p_4(k)\Lambda^3(L)
\end{dmath*}
\begin{dmath*}W_{(2)}W=p_5(k)\Omega^2(W,L)+p_4(k)\Lambda^2(L)   \end{dmath*}
   {\small
 \begin{dmath*}
 W_{(1)}W=p_7(k)\Omega^1(W,L)+p_6(k)\Lambda^1(L)   \end{dmath*}
}   
   \begin{dmath*}
   W_{(0)}W=p_7(k)\Omega^0(W,L)+p_6(k)\Lambda^0(L)
 \end{dmath*}}
where 
\begin{align*}
p_0(k)=&(k+4) (2 k+5) (2 k+7) (3 k+10) (7 k+22) (7 k+23) (8 k+27) (9 k+34)(11 k+40)\\ & (12 k+37) (15 k+52) (15 k+53) (18 k+65)\\
p_1(k)=& (k+4)^2 (2 k+5) (2 k+7) (3 k+10) (7 k+22) (8 k+27) (9 k+34) (11 k+40)\\ & (12 k+37) (15 k+52) (18 k+65) \left(336 k^2+2301 k+3940\right) \left(588 k^2+3991
   k+6752\right)\\
p_2(k)=&(k+4)^2 (2 k+5) (2 k+7) (3 k+10) (7 k+22) (8 k+27) (9 k+34) (11 k+40)\\ & (12 k+37) (15 k+52) (18 k+65)\\
p_3(k)=&(k+4) (2 k+7) (3 k+10) (7 k+20) (12 k+35) (13 k+48) (24 k+89)\\& \left(3 k^2+24 k+47\right) \left(336 k^2+2301 k+3940\right) \left(588 k^2+3991 k+6752\right)\\
p_4(k)=&(k+4)^2 (2 k+5) (2 k+7) (3 k+10) (9 k+34) (11 k+40) (12 k+37)\\
p_5(k)=&(k+4) (2 k+7) (3 k+10) (7 k+20) (24 k+89) \left(3 k^2+24 k+47\right)\\& \left(336 k^2+2301 k+3940\right) \left(588 k^2+3991 k+6752\right)\\
p_6(k)=&(k+4)^2 (2 k+5) (2 k+7) (3 k+10) (9 k+34)\\
p_7(k)=&(k+4) \left(3 k^2+24 k+47\right) \left(336 k^2+2301 k+3940\right) \left(588 k^2+3991 k+6752\right),
\end{align*}
and the fields $\Lambda^n(L)$ only depend on $L$ while every summand in the fields $\Omega^n(W)$ depends on $W$.

\begin{rem}
In addition to constructing $\mathcal{W}^k(\mathfrak{g_2},f_{\text{princ}})$ directly using quantum Hamiltonian reduction, we in parallel showed that for generic central charge $c$ there is a unique universal $\mathcal{W}(2,6)$ algebra. This has been studied previously in the physics literature \cite{BFKNRV}.

\end{rem}

With these explicit formulas in hand, we are now ready to prove the main result of this section, also stated in the introduction.
\begin{thm} We have an isomorphism of rational vertex algebras:

$$W_{-\frac{16}{5}} (\frak g_2, f_{sub})^{S_3} \cong W_{-\frac{19}{6}} (\frak g_2, f_{prin}).$$

\end{thm}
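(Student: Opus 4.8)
The plan is to first dispose of the left-hand side using the structure of the subregular algebra, and then to recognize the resulting orbifold as the principal $\mathfrak g_2$-algebra through the rigidity of the $\mathcal{W}(2,6)$ family. By Proposition \ref{subreg} we have $W_{-\frac{16}{5}}(\mathfrak g_2,f_{sub})\cong \mathcal{L}_{-22/5}^{\otimes 3}$, and taking $S_3$-fixed points on both sides reduces the claim to the isomorphism $(\mathcal{L}_{-22/5}^{\otimes 3})^{S_3}\cong W_{-\frac{19}{6}}(\mathfrak g_2,f_{prin})$ of Theorem \ref{25}. Since $\mathcal{L}_{-22/5}$ is a simple rational vertex algebra and $S_3$ acts by automorphisms, $\mathcal{L}_{-22/5}^{\otimes 3}$ is simple and rational and its fixed-point subalgebra is again simple, with rationality following from the known orbifold rationality results for finite (solvable) groups. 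A direct check of central charges, using $c=-\frac{4(k+2)(6k+17)}{k+4}$ at $k=-\frac{16}{5}$ and the principal formula $c(k)=-\frac{2(12p-7q)(7p-4q)}{pq}$ at $k=-\frac{19}{6}$ (so $p=5$, $q=6$), shows both sides have $c=-\frac{66}{5}=3\cdot(-\frac{22}{5})$.

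The core of the argument is to prove that $(\mathcal{L}_{-22/5}^{\otimes 3})^{S_3}$ is of type $(2,6)$, i.e.\ strongly generated by the conformal vector $L$ and a single primary weight-$6$ field $W$. I would start from the generic strong generating set of type $(2,4,6^2,8^2,9,10^2,11,12^3)$ provided by Theorem \ref{main}, and then exploit the weight-$4$ singular vector $\left(L_i(-2)^2-\tfrac{3}{5}L_i(-4)\right)\mathbb{1}$ present in each tensor factor of $\mathcal{L}_{-22/5}$. Summing these three null vectors over the factors and using $\sum_i L_i(-4)\mathbb{1}=\tfrac12\partial^2\omega$ yields an $S_3$-invariant relation expressing the invariant quadratic field $\sum_i L_i(-2)^2\mathbb{1}$, equivalently the generic weight-$4$ generator $W_{0,0}$, as a descendant of $L$, removing it from the generating set. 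Feeding these three null relations and their descendants into the normal-ordering identities \eqref{nop1}--\eqref{nop3}, exactly as in the quantum-correction reductions of Section 3 but now with the extra minimal-model relations available, one expresses the redundant weight-$6$ generator and all the generators of weights $8,9,10,11,12$ in terms of $L$, a single surviving weight-$6$ field, and their normally ordered products and derivatives. As at the end of the proof of Theorem \ref{main}, a comparison of the graded character against the free character built from the two generators confirms that nothing further can be eliminated, so the type is exactly $(2,6)$.

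I would then normalize the surviving weight-$6$ generator to be primary with respect to $L$, obtaining a simple $\mathcal{W}(2,6)$ vertex algebra at $c=-\frac{66}{5}$. By the Remark above, for generic $c$ the universal $\mathcal{W}(2,6)$ algebra is unique, and one checks that $c=-\frac{66}{5}$ is generic in the relevant sense: at $k=-\frac{19}{6}$ the governing polynomials do not degenerate, since $336k^2+2301k+3940=\tfrac{137}{6}$ and $588k^2+3991k+6752=\tfrac{61}{6}$ are both nonzero. Consequently both $(\mathcal{L}_{-22/5}^{\otimes 3})^{S_3}$ and $W_{-\frac{19}{6}}(\mathfrak g_2,f_{prin})$ are simple quotients of the one universal $\mathcal{W}(2,6)$ algebra at this central charge, hence are isomorphic. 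In practice I would confirm this identification, and pin down the level, by computing the self-OPE $W_{(n)}W$ of the orbifold generator with the OPE package \cite{T} and matching the self-coupling structure constant (the coefficient of $W$ in $W_{(5)}W$) against the specialization at $k=-\frac{19}{6}$ of the explicit principal-$\mathfrak g_2$ formulas listed above; the level is determined by $c$ only up to Feigin--Frenkel duality, and the dual pair $k=-\frac{19}{6},-\frac{18}{5}$ gives isomorphic algebras, so the matching is unambiguous.

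The step I expect to be the main obstacle is the generator reduction establishing that the type is exactly $(2,6)$: one must verify that every generic generator of weight $\ge 8$, together with the redundant weight-$6$ field, genuinely collapses once the three weight-$4$ null relations and their descendants are imposed. This requires the same heavy quantum-correction bookkeeping as in Section 3, now entangled with the minimal-model relations, and is where essentially all the computational content resides. A secondary point, already dispatched above, is confirming the nondegeneracy of $c=-\frac{66}{5}$ so that uniqueness of the universal $\mathcal{W}(2,6)$ algebra applies.
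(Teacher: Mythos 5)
Your proposal follows essentially the same route as the paper: reduce via Proposition \ref{subreg} to $(\mathcal{L}_{-22/5}^{\otimes 3})^{S_3}$, establish that this orbifold is of type $(2,6)$, match the OPEs of the weight-$2$ and weight-$6$ generators against those of the (universal) principal $W$-algebra at $k=-\frac{19}{6}$, and conclude from the universal property together with the simplicity of orbifolds of simple vertex algebras \cite{CM}. The only real difference is that the paper imports the explicit weight-$2$ and weight-$6$ generators (and hence the type-$(2,6)$ statement) from \cite{MPS} rather than re-deriving them from the generic generating set and the weight-$4$ null vectors as you sketch, and it matches OPEs directly by computer rather than passing through the genericity of the universal $\mathcal{W}(2,6)$ algebra at $c=-\frac{66}{5}$.
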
 
\begin{proof} Using Proposition \ref{subreg}, and explicit generators of weights $2$ and $6$ of $(\L_{-22/5}^{\otimes 3})^{S_3}$ obtained in \cite{MPS}, we can explicitly 
compute OPEs among generators. 
Computer computation with the OPE package \cite{T} shows that we get identical OPEs also from $W^{-\frac{19}{6}}(\frak g, f_{prin})$ (we only have to slightly normalize the $W$ generator given above). Using the universal property for $W^k(\frak{g},f_{prin})$, we get a vertex algebra map from $W^{-\frac{19}{6}}(\frak g, f_{prin})$ to $W_{-\frac{16}{5}} (\frak g_2, f_{sub})^{S_3}$. Since the $S_3$-orbifold of a simple vertex algebra is always simple \cite{CM}, we see that 
this map factors through the simple quotient $W_{-\frac{19}{6}}(\frak g, f_{prin})$.
\end{proof} 

From the OPEs we can also see that there are also collapsing levels to the Virasoro algebra. Here we slightly abuse the term "collapsing level" originally introduced 
in an important work of Adamovi\'c et al \cite{AKMPP} to indicate those levels for which the simple minimal affine $W$-algebra reduces to an affine vertex algebra. 

\begin{prop} The simple affine $W$-algebras $W_k(\frak g_2,f_{prin})$ collapses to a (simple) Virasoro vertex algebra if and only if $k \in \{  -\frac{34}{9}, -\frac{7}{2}, -\frac{10}{3}, -\frac{5}{2}, -\frac{22}{7}, -\frac{65}{18},-\frac{40}{11},-\frac{37}{12},-\frac{27}{8}, -\frac{52}{15},-\frac{53}{15},-\frac{23}{7} \}$.
\end{prop}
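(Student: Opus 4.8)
The plan is to detect the collapse through the single structure constant governing the self-coupling of the weight-$6$ generator $W$. Write $W_k:=W_k(\frak g_2,f_{prin})$ for the simple quotient and let $I_{max}\subset W^k(\frak g_2,f_{prin})$ be the maximal proper ideal, so that $W_k=W^k(\frak g_2,f_{prin})/I_{max}$. Since $W^k(\frak g_2,f_{prin})$ is strongly generated by $L$ and $W$, the algebra collapses to a nontrivial Virasoro vertex algebra precisely when the image $\bar W$ of $W$ vanishes in $W_k$ while $\bar L\neq 0$: in that case $W_k$ is strongly generated by $\bar L$, hence is a nonzero simple quotient of $\mathcal V_{c(k)}$ and therefore isomorphic to $\mathcal L_{c(k)}$. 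Conversely, if $W_k\cong \mathcal L_{c(k)}$ with $c(k)\neq 0$, then $\bar W$ is a primary vector of weight $6$ inside the irreducible vacuum module $\mathcal L_{c(k)}$; as the vacuum is the unique singular vector there, $\bar W=0$. So I would first record the equivalence: collapse to a nontrivial Virasoro algebra holds iff $\bar W=0$ and $\bar L\neq 0$.

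Next I would translate $\bar W=0$ into the vanishing of the norm of $W$. The maximal ideal $I_{max}$ equals the radical of the invariant bilinear form, so $\bar W=0$ iff $W$ lies in that radical. Because $W$ is primary of weight $6$, it is orthogonal to every Virasoro descendant of the vacuum, and hence $W$ is isotropic iff $\langle W,W\rangle=0$. Invariance of the form identifies $\langle W,W\rangle\,\1$ with the top term $W_{(11)}W$ of the self-OPE computed above (up to a nonzero scalar), namely
$$W_{(11)}W=-\frac{2p_0(k)}{81}(7k+24)(12k+41)\left(336k^2+2301k+3940\right)\left(588k^2+3991k+6752\right)\1.$$
Under the standing genericity assumption the two quadratic factors are nonzero, so $\bar W=0$ is equivalent to $p_0(k)(7k+24)(12k+41)=0$.

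Finally I would impose $\bar L\neq 0$ to isolate the genuine Virasoro collapses among these candidate levels. The cleanest way is to use $W_{(9)}W=\tfrac{4p_0(k)}{27}(k+4)(336k^2+\cdots)(588k^2+\cdots)L$: whenever $\bar W=0$ the whole self-OPE lands in $I_{max}$, so if $p_0(k)\neq 0$ this forces $L\in I_{max}$ and then $\1\in I_{max}$, i.e. $W_k=\CC$. Thus a nontrivial collapse needs $p_0(k)=0$. Equivalently, one checks from $c(k)=-\tfrac{2(12p-7q)(7p-4q)}{pq}$, $k=-4+p/q$, that $c(k)=0$ exactly at $k\in\{-\tfrac{24}{7},-\tfrac{41}{12}\}$, which are precisely the roots of $(7k+24)$ and $(12k+41)$; at these two levels the algebra collapses to $\CC$ rather than to a nontrivial Virasoro algebra. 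Discarding also the critical root $k=-4$ of $p_0$, the remaining solutions of $p_0(k)=0$ are exactly the twelve listed levels, read off from the linear factors $(2k+5),(2k+7),(3k+10),(7k+22),(7k+23),(8k+27),(9k+34),(11k+40),(12k+37),(15k+52),(15k+53),(18k+65)$.

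I expect the main obstacle to be conceptual rather than computational, since the OPE data $p_0,\dots,p_7$ is already in hand: one must correctly distinguish collapse to a nontrivial Virasoro algebra from collapse to $\CC$, which is exactly why the answer is governed by $p_0(k)=0$ and not by the vanishing of the full top coefficient $W_{(11)}W$ (whose extra factors $(7k+24)(12k+41)$ produce the $c=0$ degenerations). A secondary point to verify for the sufficiency direction is that once $\bar W=0$ and $\bar L\neq 0$, the surviving pure-$L$ terms $W_{(7)}W,\dots,W_{(0)}W$ (carrying the coefficients $p_1,p_2,p_4,p_6$) are automatically consistent with $W_k\cong\mathcal L_{c(k)}$; this is forced by simplicity of $W_k$, and indeed at each of the twelve levels $c(k)$ is a minimal-model central charge $c_{p,q}$ for which these combinations are precisely the corresponding Virasoro singular vectors.
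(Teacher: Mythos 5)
Your proof is correct in substance but follows a genuinely different route from the paper's. The paper argues case by case on the twelve levels: for $k\in\{-\tfrac{5}{2},-\tfrac{7}{2},-\tfrac{10}{3},-\tfrac{34}{9}\}$ it observes that the common factor $(2k+5)(2k+7)(3k+10)(9k+34)$ of $p_1,p_2,p_4,p_6$ kills every pure-$L$ term in the $W$--$W$ OPE, so the ideal generated by $W$ closes on itself; for the remaining eight levels it exhibits the surviving combinations $\Lambda^i(L)$ explicitly as multiples of Virasoro singular vectors (weight $4$, $8$ or $10$ depending on the minimal-model central charge), concluding in each case that $L$ does not lie in the ideal generated by $W$. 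You instead run everything through the invariant bilinear form: $I_{max}$ is its radical, $W$ primary of weight $6$ is isotropic iff $\langle W,W\rangle=0$, and $\langle W,W\rangle$ is read off from $W_{(11)}W$; this makes the sufficiency direction automatic (once $\bar W=0$, simplicity forces all lower poles into $I_{max}$ with no consistency to check, and $\bar L\neq 0$ follows from $c(k)\neq 0$) and cleanly disposes of the two roots of $(7k+24)(12k+41)$, which the paper's necessary condition lists but never explicitly eliminates. Your approach buys a uniform, computation-free argument and a sharper treatment of the $c=0$ degenerations; the paper's buys the explicit identification of the $\Lambda^i(L)$ with singular vectors, which is what it actually needs later to recognize \emph{which} minimal model appears. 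Two small points to repair: the step ``$L\in I_{max}$ and then $\1\in I_{max}$'' is wrong as written ($I_{max}$ is proper; the correct conclusion is simply that $W_k$ is spanned by $\1$, i.e.\ $W_k\cong\CC$), and you should record that treating $\L_0\cong\CC$ as not being a Virasoro collapse is a convention — it is the one the paper implicitly adopts, since $-\tfrac{24}{7}$ and $-\tfrac{41}{12}$ are absent from its list.
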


\begin{proof}
We first notice that in order for $W_k(\frak g_2):=W_k(\frak g_2,f_{prin})$ to collapse to a (simple) Virasoro vertex algebra we must have $$p_0(k)(7 k+24) (12 k+41) \left(336 k^2+2301 k+3940\right) \left(588 k^2+3991 k+6752\right)=0.$$
If $k\in \{  -\frac{34}{9}, -\frac{7}{2}, -\frac{10}{3}, -\frac{5}{2} \}$ this is quite evident from the fact that every summand in every pole of the OPE of $W$ with itself will be a normally ordered multiple of $W$ or one of its derivatives. These are also the only levels for which this occurs for the generic values of the central charge.
 The remaining cases are more interesting because the relevant Virasoro algebras are minimal. We will start with the details of $k=-\frac{65}{18}$, which corresponds to a central charge of $c=-\frac{46}{3}$. In this case there is a singular vector of conformal weight 8 which is inherited from the subalgebra copy of $\L_{-46/3}$, namely
\begin{dmath*}V_8=-\frac{757693104671125}{2529990231179046912}\left(29160\nop{LLLL}-12960\nop{(\partial L)(\partial L)L}-42120\nop{(\partial^2 L)LL}+5670\nop{(\partial^2 L)(\partial^2L)}+720\nop{(\partial^3 L)(\partial L)}-3960\nop{(\partial^4 L)L}-139\partial^6 L\right).\end{dmath*}
Using the above notation we have
\be\begin{aligned}\label{trickycollapse}
p_4\left(-\frac{65}{18}\right)\Lambda^3(L)&=V_8\\
p_4\left(-\frac{65}{18}\right)\Lambda^2(L)&=\partial V_8\\
p_6\left(-\frac{65}{18}\right)\Lambda^1(L)&=\frac{81}{820}\partial^2 V_8+\frac{78}{205}\nop{L V_8}\\
p_6\left(-\frac{65}{18}\right)\Lambda^0(L)&=-\frac{15}{41}\nop{(\partial L)V_8}+\frac{27}{82}\nop{L(\partial V_8)},
\end{aligned}\ee
and as such $L$ is not inside  the ideal generated by $W$, meaning ${W}_{-\frac{65}{18}}(\frak g_2)\cong \L_{-46/3}$.
The cases when $k=-\frac{22}{7}$ also corresponds to $c=-\frac{46}{3}$ and gives ${W}_{-\frac{22}{7}}(\frak g_2)\cong \L_{-46/3}$  similarly. Further the cases when $k=-\frac{27}{8}$ and $k=-\frac{65}{18}$ correspond to $c=-\frac{3}{5}$ for which there is also a Virasoro singular vector of conformal weight 8 leading to a set of equations similar to \eqref{trickycollapse}, yielding 
$$ {W}_{-\frac{27}{8}}(\frak g_2)\cong \L_{-3/5} \text{ and } {W}_{-\frac{65}{18}}(\frak g_2)\cong \L_{-3/5}.$$
The cases with $k=-\frac{40}{11}$ and $k=-\frac{37}{12}$ correspond to $c=-\frac{232}{11}$, where we have a singular vector of weight 10. In each of these cases $\Lambda^1(L)$ and $\Lambda^0(L)$ are multiples of the singular vector and its derivative, respectively, and thus 
$$ {W}_{-\frac{40}{11}}(\frak g_2)\cong \L_{-232/11}  \text{ and } {W}_{-\frac{37}{12}}(\frak g_2)\cong \L_{-232/11}.$$
Finally, the cases with $k=-\frac{53}{15}$ and $k=-\frac{23}{7}$ correspond to $c=-\frac{22}{5}$, for which their is a singular vector of weight 4 leading to equations similar to \eqref{trickycollapse} for the appropriate poles, thus
$$ {W}_{-\frac{53}{15}}(\frak g_2)\cong \L_{-22/5}  \text{ and } {W}_{-\frac{23}{7}}(\frak g_2)\cong \L_{-22/5}.$$ \color{black}
\end{proof}
\begin{rem} The above result  also gives a new proof or rationality of four admissible $\frak{g}_2$ minimal models: $(p,q)=(5,8)$, $(7,18)$, $(7,15)$ and $(4,11)$. 
Using asymptotic properties of characters of $\frak{g}_2$ and Virasoro minimal models, it follows that only two additional $G_2$ models 
are extensions of Virasoro minimal models: $(p,q)=(4,13)$ and $(5,11)$ of level $-\frac{48}{13}$ and $-\frac{39}{11}$. For the former we obtain decomposition
$$W_{-\frac{48}{13}}(\frak g_2) = L_{Vir}(c_{3,26},0) \oplus L_{Vir}(c_{3,26},6),$$
and for the latter we can write
$$W_{-\frac{39}{11}}(\frak g_2) = L_{Vir}(c_{11,30},0) \oplus M,$$
where $M$ is a $L(c_{11,30},0)$-module. Conjecturally, we expect 
$$W_{-\frac{39}{11}}(\frak g_2) = L_{Vir}(c_{11,30},0)  \oplus L_{Vir}(c_{11,30},6) \oplus L_{Vir}(c_{11,30},24) \oplus L_{Vir}(c_{11,30},63).$$

\end{rem}

The appearance of the group $S_3$ in the setup of $W^k(\frak g_2,f_{sub})$  is not a coincidence. It is known that 
any element of the component group $A(f)$ of a nilpotent orbit $\mathcal{O}$ induces an automorphism of the affine $W$-algebra.
Component and fundamental groups are always finite and their complete list can be found in \cite[Chapter 8]{Coll}.
In particular for the subregular orbit $\mathcal{O}_{sub}$ of $\frak g_2$ we have $\pi(\mathcal{O}_{sub})=A(f_{sub})=S_3$. It is also easy to show using explicit OPEs in Proposition \ref{OPE.sub} that 
${\rm Aut}(W^k(\frak g_2,f_{sub}))=S_3$.
There are a few more examples of nilpotent orbits of simple Lie algebra that are conjecturally related to permutation orbifolds of $(2,5)$-minimal models, the largest being $S_5$ for a particular nilpotent orbit $E_8(a_7)$ of $E_8$.  Let $f_{s_5} \in E_8(a_7)$. 
Motivated by numerical evidence we expect that 
\begin{conjecture}  We have an isomorphism 
$$W_{k}(\frak e_8,f_{s_5}) \cong \mathcal{L}_{-22/5}^{\otimes^5}$$
and moreover
$$W_{k}(\frak e_8,f_{s_5})^{S_5} \cong W_{-\frac{144}{5}}(\frak e_8,f_{sub})$$
where $k$ is a certain level such that $c(k)=-22$.
\end{conjecture}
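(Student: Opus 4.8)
The plan is to follow, step by step, the template that succeeded for $\frak g_2$ in Section 5, treating the pair $(f_{s_5}, f_{sub})$ for $\frak e_8$ exactly as we treated $(f_{sub}, f_{prin})$ for $\frak g_2$. For the first isomorphism I would begin by reading off the type of $W^k(\frak e_8, f_{s_5})$ from the Kac-Wakimoto prescription \cite{KW}: fix an $\frak{sl}_2$-triple $\{e, f_{s_5}, h\}$ with $f_{s_5} \in E_8(a_7)$, decompose $\frak e_8^{f_{s_5}}$ under ${\rm ad}(x)$ with $x = h/2$, and record the conformal weights of a minimal strong generating set. As in Proposition \ref{subreg}, the key point (to be verified from the grading) is that there should be exactly five weight-$2$ generators, one of which is a conformal field $L$, while all remaining generators of weight $>2$ span an ideal $I$. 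I would then specialize to the level $k$ with $c(k) = -22 = 5 \cdot (-\tfrac{22}{5})$, pass to $W := W^k(\frak e_8, f_{s_5})/I$, and exhibit five explicit mutually commuting conformal vectors $L_1, \dots, L_5$, formed as linear combinations of the weight-$2$ generators, each of central charge $-\tfrac{22}{5}$, in direct analogy with the three $L_i(z)$ constructed for $\frak g_2$. This realizes $W$ as a quotient of $\V_{-22/5}^{\otimes 5}$; since the vacuum module $\V_{-22/5}$ carries a singular vector of weight $4$ (the $(2,5)$ vacuum null field), the maximal ideal of $W^k(\frak e_8, f_{s_5})$ must contain $I$ together with the five degree-$4$ singular vectors $v_{sing}^{(i)}$, forcing $W_k(\frak e_8, f_{s_5}) \cong \L_{-22/5}^{\otimes 5}$.

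For the second isomorphism I would invoke the general principle, already used in Section 5, that the component group $A(f_{s_5}) = \pi(E_8(a_7)) = S_5$ acts by automorphisms on $W_k(\frak e_8, f_{s_5})$; under the isomorphism just established this action must permute the five commuting Virasoro copies, and one checks from the explicit $L_i$ that it realizes the full permutation action on $\L_{-22/5}^{\otimes 5}$. Taking invariants, the problem reduces to identifying $(\L_{-22/5}^{\otimes 5})^{S_5}$ with $W_{-144/5}(\frak e_8, f_{sub})$. Following the proof of Theorem \ref{25}, I would construct explicit low-weight $S_5$-invariant generators in $(\L_{-22/5}^{\otimes 5})^{S_5}$, compute their OPEs, and match them against the OPEs of the subregular generators of $W^{-144/5}(\frak e_8, f_{sub})$; the universal property of $W^{-144/5}(\frak e_8, f_{sub})$ then produces a vertex algebra map into the orbifold, which factors through the simple quotient because the orbifold of a simple vertex algebra by a finite group is always simple \cite{CM}. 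A character comparison, pitting the $S_5$-orbifold character of $\L_{-22/5}^{\otimes 5}$ against the strong-generator character of the subregular algebra, would finish surjectivity and hence the isomorphism.

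The main obstacle is entirely computational rather than conceptual: whereas the $\frak g_2$ principal algebra is of the very small type $(2,6)$, the subregular $W$-algebra $W^{-144/5}(\frak e_8, f_{sub})$ has a large strong generating set of high weight, so writing down its defining OPEs and matching them with those of the $S_5$-orbifold pushes well beyond the reach of a direct run of the OPE package \cite{T}. Determining the precise type of both $W^k(\frak e_8, f_{s_5})$ and $W^{-144/5}(\frak e_8, f_{sub})$, and organizing the orbifold generators so that the match becomes checkable, is where the real work lies; this is precisely why the statement is recorded as a conjecture supported by numerical evidence rather than proved. A conceptual shortcut, if one can be found, would be a reduction-by-stages argument realizing the $S_5$-orbifold of the $E_8(a_7)$-reduction directly as the subregular reduction, bypassing the explicit OPE computation altogether.
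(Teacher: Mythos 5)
The statement you were asked to prove is stated in the paper only as a \emph{conjecture}, supported by numerical evidence; the paper gives no proof, so there is no argument of the authors' to compare yours against. Your outline is exactly the strategy the surrounding text of Section 5 suggests (mimic Proposition \ref{subreg} for the first isomorphism, then the OPE-matching and universal-property argument of Theorem \ref{25} for the second, using $A(f_{s_5})=S_5$ and simplicity of orbifolds of simple vertex algebras), and you correctly identify that the missing content is computational. So your proposal should be judged as a plan, not a proof, and as a plan it is the right one.

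That said, two steps in your sketch are asserted more confidently than the evidence warrants, and they are precisely where the conjecture could fail. First, in the $\frak g_2$ subregular case the only generator of weight $>2$ was a single weight-$3$ primary $G$, and even there one had to argue that $I_{max}=\langle G, v_{sing}^{(1)},v_{sing}^{(2)},v_{sing}^{(3)}\rangle$; for $f_{s_5}\in E_8(a_7)$ the centralizer $\frak e_8^{f_{s_5}}$ is $40$-dimensional, so $W^{k}(\frak e_8,f_{s_5})$ has dozens of strong generators of weight $>2$, and the claim that they all lie in (let alone generate, together with the five weight-$4$ null fields) the maximal ideal at the special level is a substantive assertion with no \emph{a priori} justification --- this is the heart of the first conjectured isomorphism, not a routine verification. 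Second, even granting $A(f_{s_5})=S_5$, one must show the induced automorphisms act by genuinely permuting the five commuting Virasoro copies (rather than, say, factoring through a proper quotient of $S_5$ on them); in the $\frak g_2$ case this was checked from explicit OPEs, which is exactly what is out of reach here. Your closing paragraph acknowledges all of this honestly, so the proposal is internally consistent; it just does not, and cannot at present, constitute a proof.
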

The vertex algebra $W^{k}(\frak e_8,f_{sub})$ appeared in the work of Arakawa and van Ekeren, where they determined its type \cite{AE}.
Another example coming from the principal affine $W$-algebras of type  $\frak f_4$ was discussed in \cite{MPS}.
\newpage 

\section{Appendix}

\begin{dmath*}
\Lambda^0(L)=\left(\frac{5}{108} (k+4)^4 \left(6400 k^2+44499 k+77324\right) \left(15552 k^2+104313 k+174820\right)\nop{(\partial L)LLLL}-\frac{5}{27} (k+4)^3 \left(307988352 k^6+6283922508 k^5+53396773281 k^4+241877304092 k^3+616017723435 k^2+836341686788 k+472884439520\right)\nop{(\partial L)(\partial L)(\partial L)L}-\frac{10}{27} (k+4)^3 (275123520 k^6+5633364834 k^5+48051261273 k^4+218548121907 k^3+559006760767 k^2+762411306554 k+433163811960)\nop{(\partial^2L)(\partial L)LL}-\frac{5}{324} (k+4)^3 \left(969034752 k^6+19831695840 k^5+169065214212 k^4+768473937261 k^3+1964285118664 k^2+2677025069232 k+1519701610880\right)\nop{(\partial^3 L)LLL}+\frac{5}{144} (k+4)^2 \left(5638139136 k^8+154059566976 k^7+1841500124352 k^6+12576741811284 k^5+53677845043923 k^4+146605941466828 k^3+250228450849072k^2+244023285373120 k+104099884473600\right)\nop{(\partial^2L)(\partial^2L)(\partial L)}+\frac{5}{648} (k+4)^2 \left(18734696064 k^8+510397850592 k^7+6081840989784 k^6+41400733913190 k^5+176094016747663 k^4+479229337653394k^3+814897731901776k^2+791597177469376 k+336326728957440\right)\nop{(\partial^3 L)(\partial L)(\partial L)}+\frac{5}{648} (k+4)^2 \left(22340199168 k^8+610210770624 k^7+7291072985472 k^6+49774197908628 k^5+212341512557497 k^4+579671163698668 k^3+988878536674352k^2+963826243416704 k+410925607848960\right)\nop{(\partial^3 L)(\partial^2L)L}+\frac{5}{648} (k+4)^2 \left(13245410304 k^8+361228956480 k^7+4308954420744 k^6+29363893309686 k^5\\+125032199800487 k^4+340637406241986 k^3+579856576183872k^2+563876243190880 k\\+239824222944000\right)\nop{(\partial^4 L)(\partial L)L}+\frac{1}{648} (k+4)^2 \left(9845660160 k^8+268414896960 k^7+3200584374936 k^6+21801802242996 k^5+92791755994681 k^4+252682575276940 k^3+429917282396168k^2+417844424302784 k+177612968146560\right)\nop{(\partial^5 L)LL}\\-\frac{5}{3888}\left((k+4) \left(70157187072 k^{10}+2395569012480 k^9+36804815716032 k^8+335044043053056 k^7\\+2001297244970172 k^6+8196068493288525 k^5+23306444396904280k^4+45438751798882656 k^3\\+58127345062469696 k^2+44057798753256960 k+15024744784588800\right)\right)\nop{(\partial^4L)(\partial^3 L)}-\frac{1}{432} (k+4) \left(26244933120 k^{10}+896077256256 k^9+13765833642960 k^8+125302104393228 k^7+748383139429071 k^6\\+3064580827705257 k^5+8713479609911801
   k^4+16985912403534748 k^3+21726334014777568 k^2+16465260149252096 k+5614212652439040\right)\nop{(\partial^5L)(\partial^2 L)}-\frac{1}{7776} (k+4) \left(203131514880 k^{10}+6923867537664 k^9+106181890784928 k^8+964771316683128 k^7+5751509501236356 k^6\\+23506873565646933 k^5+66704618215297200k^4+129768328341307800 k^3+165637043357614976 k^2+125258475185645440 k+42615931244544000\right)\nop{(\partial^6 L)(\partial L)}-\frac{1}{54432} (k+4) \left(322620641280 k^{10}+10994200616448 k^9+168561180996768 k^8+1531141783523712 k^7+9125319182583162 k^6+37284399685556705 k^5+105765664885942706k^4+205685698522399232 k^3+262439463665615264 k^2+198382995639216128 k+67465854311966720\right)\nop{(\partial^7 L)L}+\frac{1}{7838208}\left(6022251970560 k^{12}+246417645809664 k^{11}+4620640231113984 k^{10}+52502905368301248 k^9+402625978586303472 k^8+2195280019747600740 k^7+8726413019732753489k^6+25481066563320502308 k^5+54244347536596270928 k^4+82102378453675558848 k^3+83866072140917341952 k^2+51910726542998341632 k+14724215780641198080\right)\partial^9L\right)
\end{dmath*}

\begin{dmath*}
\Lambda^1(L)=\left(\frac{1}{54} (k+4)^4 \left(6400 k^2+44499 k+77324\right) \left(15552 k^2+104313 k+174820\right)\nop{LLLLL}-\frac{1}{108} (k+4)^3 \left(18396661248 k^6+375252938352 k^5+3187752964032 k^4+14435363779845 k^3+36751413129532 k^2+49876565020640 k+28189138886400\right)\nop{(\partial L)(\partial L)LL}-\frac{1}{108} (k+4)^3 \left(7309080576 k^6+149628924864 k^5+1276015823004 k^4+5802191762295 k^3+14836963528264 k^2+20229656112560 k+11489725824000\right)\nop{(\partial^2L)LL}+\frac{1}{36} (k+4)^2 \left(23462695872 k^8+639247348656 k^7+7617735370116 k^6+51859882602369 k^5+220598608315643 k^4+600396725500187 k^3+1021026239675080
   k^2+991925566014784 k+421482454671360\right)\nop{(\partial^2 L)(\partial L)(\partial L)}+\frac{1}{18} (k+4)^2 \left(6994584576 k^8+191067584400 k^7+2283143277456 k^6+15587734929978 k^5+66504839400355 k^4+181569276291374 k^3+309776746880856
   k^2+301962382197456 k+128756373080640\right)\nop{(\partial^2L)(\partial^2L)L}+\frac{1}{324} (k+4)^2 \left(186037762176 k^8+5066868806496 k^7+60357766817448 k^6+410736997701210 k^5+1746416931870509 k^4+4750984123226478 k^3+8075489288448560
   k^2+7841219537106944 k+3329981663093760\right)\nop{(\partial^3L)(\partial L)L}+\frac{1}{324} (k+4)^2 \left(32936730624 k^8+898058750400 k^7+10710123170616 k^6+72967497795678 k^5+310615711789849 k^4+846002556795866 k^3+1439691140976208
   k^2+1399565091796448 k+595050159237120\right)\nop{(\partial^4L)LL}\\-\frac{1}{1944}\left((k+4) \left(491100309504 k^{10}+16750248391296 k^9+257051280481632 k^8+2337267620900808 k^7\\+13944382286929770 k^6  +57038097408972443 k^5+161993956733818346
   k^4+315431508176707400 k^3\\+403003320724401472 k^2+305066845875948032 k  +103900852253614080\right)\right)\nop{(\partial^3L)(\partial^3L)}\\-\frac{9}{2} (k+4) \left(175136919552 k^{10}+5979878037504 k^9+91868150287872 k^8+836252460625968 k^7+4994829802686456 k^6\\+20454391804109553 k^5+58160362836551484
   k^4+113382401413021648 k^3+145032681121579968 k^2+109918839350265344 k+37481595394344960\right)\nop{(\partial^4L)(\partial^2L)}-\frac{9}{10} (k+4) \left(435921937920 k^{10}+14849294253696 k^9+227573441700192 k^8+2066320856088936 k^7+12309641634769476 k^6+50273171379297971 k^5+142548444287558280
   k^4+277094185622399176 k^3+353390471229322752 k^2+267011039945734656 k+90762062273495040\right)\nop{(\partial^5L)(\partial L)}-\frac{1}{20} (k+4) \left(2019366236160 k^{10}+68818738157568 k^9+1055174614367328 k^8+9585390321860400 k^7+57131321802317178 k^6+233447722387774347
   k^5+662289994566698810 k^4+1288111802671584368 k^3+1643727302314709024 k^2+1242686605881161472 k+422671810809108480\right)\nop{(\partial^6 L)L}+\frac{3}{280} \left(1505562992640 k^{12}+61604411452416 k^{11}+1155160057778496 k^{10}+13125726342075312 k^9+100656494870524668 k^8\\+548820011579297193
   k^7+2181603341046966329 k^6+6370267278173246187 k^5+13561089829536455432 k^4+20525603316596375472 k^3+20966534092725067520 k^2+12977698548834862080
   k+3681061731402547200\right)\partial^8L\right)
\end{dmath*}

\begin{dmath*}
\Lambda^2(L)=\left(-\frac{1}{54} (k+4)^3 \left(92703744 k^4+1267774752 k^3+6500755699 k^2+14813187576 k+12656385200\right)\nop{(\partial L)LLL}+\frac{7}{324} (k+4)^2 \left(253925280 k^6+5192050608 k^5+44221218606 k^4+200812630609 k^3+512794652934 k^2+698174819888 k+395952508800\right)\nop{(\partial L)(\partial L)(\partial L)}+\frac{1}{108} (k+4)^2 \left(2120102208 k^6+43502274144 k^5+371903400636 k^4+1695590435761 k^3+4348182557408 k^2+5946567147312 k+3388333598400\right)\nop{(\partial^2L)(\partial L)L}+\frac{1}{162} (k+4)^2 \left(701338176 k^6+14385000672 k^5+122924343060 k^4+560169728659 k^3+1435743475008 k^2+1962381639248 k+1117447262400\right)\nop{(\partial^3 L)LL}-\frac{1}{162} (k+4) \left(2788079616 k^8+76356765072 k^7+914888731176 k^6+6263983859931 k^5+26804756946483 k^4+73409308592217 k^3+125651464323508 k^2+122897231016608
   k+52588383811200\right)\nop{(\partial^3 L)(\partial^2 L)}-\frac{1}{648} (k+4) \left(6614576640 k^8+180882084096 k^7+2163835759680 k^6+14790044447592 k^5+63175152837203 k^4+172683823446184 k^3+294973049842432
   k^2+287883548198528 k+122904465043200\right)\nop{(\partial^4 L)(\partial L)}-\frac{1}{3240}(k+4) \left(9850740480 k^8+269302852224 k^7+3220621636932 k^6+22006242372675 k^5+93966753547809 k^4+256756287752401 k^3+438412318910224 k^2+427697924147920
   k+182513991043200\right)\nop{(\partial^5L)L}\right)
\end{dmath*}

\begin{dmath*}
\Lambda^3(L)=\left(-\frac{1}{108} (k+4)^3 \left(92703744 k^4+1267774752 k^3+6500755699 k^2+14813187576 k+12656385200\right)\nop{LLLL}+\frac{1}{54} (k+4)^2 \left(1769433120 k^6+36170718192 k^5+307983344514 k^4+1398147789371 k^3+3569086504346 k^2+4857512624432 k+2753671656000\right)\nop{(\partial L)(\partial L)L}+\frac{1}{108} (k+4)^2 \left(2112058368 k^6+43328638080 k^5+370338214908 k^4+1688049810869 k^3+4327706491216 k^2+5916856032528 k+3370337692800\right)\nop{(\partial^2 L)LL}-\frac{1}{72} (k+4) \left(2788079616 k^8+76356765072 k^7+914888731176 k^6+6263983859931 k^5+26804756946483 k^4+73409308592217 k^3+125651464323508 k^2+122897231016608
   k+52588383811200\right)\nop{(\partial^2L)(\partial^2 L)}-\frac{1}{324} (k+4) \left(18545708160 k^8+506465186976 k^7+6050247604632 k^6+41294964740514 k^5+176132133532045 k^4+480724894384134 k^3+819917493355968
   k^2+798987612456896 k+340581640627200\right)\nop{(\partial^3 L)(\partial L)}-\frac{1}{1296}(k+4) \left(26316057600 k^8+719487788544 k^7+8605079033544 k^6+58802529192942 k^5+251108659932821 k^4+686196398950986 k^3+1171797687662880 k^2+1143279396494368
   k+487932447129600\right)\nop{(\partial^4 L)L}+\frac{1}{77760}\left(418211942400 k^{10}+14295878231040 k^9+219890262604320 k^8+2004122659390992 k^7+11986088867220990 k^6+49151648188457845 k^5+139958403544235118
   k^4+273252685532068176 k^3+350073420007844192 k^2+265747093350327552 k+90771065277696000\right)\partial^6 L\right)\end{dmath*}

\begin{dmath*}
\Lambda^4(L)=\left(\frac{5}{27} (k+4)^2 \left(23352 k^2+159815 k+273412\right)\nop{(\partial L)LL}-\frac{1}{12} (k+4) \left(204624 k^4+2806272 k^3+14435089 k^2+33007496 k+28308960\right)\nop{(\partial^2 L)(\partial L)}-\frac{1}{324} (k+4) \left(2441376 k^4+33467028 k^3+172062031 k^2+393206164 k+337000960\right)\nop{(\partial^3 L)L}+\frac{1}{432} \left(1185408 k^6+24378480 k^5+208905480 k^4+954774795 k^3+2454576540 k^2+3365465024 k+1922595712\right)\partial^5 L\right)\end{dmath*}

\begin{dmath*}
\Lambda^5(L)=\left(\frac{10}{81} (k+4)^2 \left(23352 k^2+159815 k+273412\right)\nop{LLL}-\frac{5}{108} (k+4) \left(613872 k^4+8372112 k^3+42798821 k^2+97197144 k+82739584\right)\nop{(\partial L)(\partial L)}-\frac{5}{108} (k+4) \left(733824 k^4+10060932 k^3+51734513 k^2+118250228 k+101370944\right)\nop{(\partial^2 L)L}+\frac{5}{324} \left(1185408 k^6+24378480 k^5+208905480 k^4+954774795 k^3+2454576540 k^2+3365465024 k+1922595712\right)\partial^4L\right)\end{dmath*}

\begin{dmath*}
\Omega^0(W,L)=\left(\frac{1}{36} \left(84672 k^6+1737288 k^5+14848530 k^4+67664823 k^3+173384186 k^2+236849032 k+134743680\right)\partial^5 W-\frac{8}{9} (k+4) \left(15246 k^4+204873 k^3+1029576 k^2+2292659 k+1908010\right)\nop{(\partial^3 L)W}-\frac{2}{9} (k+4) \left(140364 k^4+1892547 k^3+9550009 k^2+21373241 k+17898240\right)\nop{(\partial^2 L)(\partial W)}-\frac{1}{9} (k+4) \left(254520 k^4+3433662 k^3+17339635 k^2+38845538 k+32573520\right)\nop{(\partial L)(\partial^2 W)}-\frac{10}{9} (k+4) \left(9576 k^4+130128 k^3+662481 k^2+1497464 k+1267960\right)\nop{L(\partial^3 W)}+\frac{4}{3} (k+4)^2 \left(13728 k^2+93673 k+159620\right)\nop{(\partial L)LW}+\frac{2}{3} (k+4)^2 \left(13728 k^2+93673 k+159620\right)\nop{LL(\partial W)}\right)
\end{dmath*}

\begin{dmath*}
\Omega^1(W,L)=\left(\frac{1}{9} \left(336 k^2+2301 k+3940\right) \left(504 k^4+6804 k^3+34375 k^2+77012 k+64536\right)\partial^4 W-\frac{4}{3} (k+4) \left(46368 k^4+624357 k^3+3145493 k^2+7025839 k+5869260\right)\nop{(\partial^2 L)W}-\frac{2}{9} (k+4) \left(471240 k^4+6319242 k^3+31690355 k^2+70429798 k+58519920\right)\nop{(\partial L)(\partial W)}-\frac{10}{9} (k+4) \left(50400 k^4+678942 k^3+3422515 k^2+7650818 k+6398520\right)\nop{L(\partial^2 W)}+\frac{4}{3} (k+4)^2 \left(13728 k^2+93673 k+159620\right)\nop{LLW}\right)
\end{dmath*}

\begin{dmath*}
\Omega^2(W,L)=\left(\frac{10}{27} (16 k+57) (21 k+68)\partial^3 W-\frac{620}{3} (k+4)\nop{(\partial L)W}-\frac{620}{3} (k+4)\nop{L(\partial W)}\right)\end{dmath*}

\vspace{.3in}

\vspace{.2in}

\end{document}